\newtheorem{defn}{Definition}[section]
\newtheorem{theo}[defn]{Theorem}
\newtheorem{lem}[defn]{Lemma}
\newtheorem{prop}[defn]{Proposition}
\newtheorem{rem}[defn]{Remark}
\newenvironment{proof}{{\bf Proof }}{{\vskip 0.1cm \hfill$\Box$}}
\def\esssup{\text{ess\,sup}}  
\begin{document} 

\noindent
{\Large\bf Existence and approximation of Hunt processes associated with generalized Dirichlet forms}{\footnote{Supported by the research projects 0450-20100007 and 0450-20110022 of Seoul National University, 
the CRC 701 and the BIBOS-Research Center at Bielefeld University.}\\[10mm]
\noindent {\small{{\bf Vitali Peil},Universit\"at Bielefeld, Fakult\"at f\"ur Mathematik, Postfach 100131, 33501 Bielefeld,
Germany, E-mail: vitali.peil@uni-bielefeld.de, tel.: +49 521 106-6125}}\\[3mm]
\noindent {\small{{\bf Gerald Trutnau}, Seoul National University, Department of Mathematical Sciences and Research Institute of Mathematics,
599 Gwanak-Ro, Gwanak-Gu, Seoul 151-747, South Korea, E-mail: trutnau@snu.ac.kr, tel.: +82 0 2 880 2629; fax: +82 0 2 887 4694.}}\\[9mm]
\noindent
{\small{\bf Abstract.} We show that any strictly quasi-regular generalized Dirichlet form that satisfies the mild structural 
condition {\bf D3} is associated to a Hunt process, and that the associated Hunt process can be approximated  by a sequence of  multivariate Poisson processes. 
This also gives a new proof for the existence of a Hunt process associated to a strictly quasi-regular generalized Dirichlet form that satisfies {\bf SD3} and extends all 
previous results.  \\[3mm]
{\bf Mathematics Subject Classification (2000):} Primary 31C25, 60J40; Secondary 60J10, 31C15, 60J45.\\[3mm]
{\bf Key words:} generalized Dirichlet form, Hunt process, multivariate Poisson process, tightness, capacity.
\section{Introduction}\label{I}
In this note we are concerned with several questions related to probabilistic and analytic potential theory of generalized Dirichlet forms.
A particular aim is to find definitive analytic conditions for non-sectorial Dirichlet forms that ensure the existence of an associated Hunt process.
The question whether the associated process is a Hunt process is crucial for localizing purposes (see e.g. introduction of Ref. \cite{Tr5}).\\
A fundamental consequence of Theorem 2 in Ref. \cite{Tr5} and Theorem 3.2(ii) in Ref. \cite{RoTr} is that
any {\it transient} Hunt process $\mathbb{M}$ on a metrizable and 
separable state space is strictly properly associated in the resolvent sense with a strictly quasi-regular generalized Dirichlet form.
This is relevant because we can then apply all the fine results from the potential theory of generalized Dirichlet forms w.r.t. 
the strict capacity (see Ref. \cite{Tr5} for some strict potential theory, 
and Remark 3.3(iv) in Ref. \cite{RoTr} which applies also to strictly quasi-regular 
generalized Dirichlet forms and Hunt processes). 
Moreover, if the state space is only slightly less general, namely (for tightness reasons) a metrizable Lusin space, then by Theorem 2.1 in Ref. \cite{mrs} 
the Hunt process can be approximated by multivariate Poisson processes and the 
approximation works for all $P_x$, i.e. for all $x$ in the state space. The canonical approximation of the Hunt 
process by Markov chains is useful as it provides an additional tool for its analysis and for the analysis of the underlying generalized Dirichlet form. 
Note that the just mentioned line of arguments is not valid for sectorial Dirichlet forms, which underlines a strength of generalized Dirichlet form theory. 
In fact, for a given arbitrary Hunt process we first do not know how to check whether it is associated to a sectorial Dirichlet form, and second this is clearly is not true in general. \\ 
Here, we establish the \lq\lq quasi converse\rq\rq\ of the above with nearly no restriction on the state space. We consider 
two problems, which, due to the method, are in fact solved simultaneously. 
The first problem is to establish the existence of an associated Hunt process to a strictly quasi-regular generalized Dirichlet form on a general state space, 
and the second is the approximation of this Hunt process in a canonical way through Markov chains.  
The second problem goes back to an original idea of S. Ethier and T. Kurtz. 
In fact, it is shown in Chapter 4.2 in Ref. \cite{ek} that for nice state spaces such 
as  locally compact and separable state spaces and nice transition semigroups like Feller ones, the Yosida approximation via multivariate 
Poisson processes converges for all starting points to a Markov process with the given semigroup. This was generalized in Ref. \cite{mrz} 
where it is shown that the Yosida approximation of the generator together with some tightness arguments that result from the strict quasi-regularity 
leads to the approximation via multivariate Poisson processes of any Hunt process that is associated with a strictly quasi-regular sectorial Dirichlet form. 
This also led to a new proof for the existence of an associated Hunt process.
However, the price for the increased generality is that the approximation only works for strictly quasi-every starting point $x$ of the state space. 
Since we use the same method we have to pay the same price, and even more we have to assume the additional structural condition {\bf D3} that is however trivially 
satisfied for any sectorial Dirichlet form (see Proposition \ref{sd3}). Nonetheless, since the class of generalized Dirichlet forms is much larger than the 
class of sectorial Dirichlet forms our results represent a considerable generalization. In particular time-dependent processes and 
processes corresponding to far-reaching perturbations of symmetric (or even sectorial) forms, are covered.\\
Besides the canonical approximation scheme through Markov chains we want to emphazise that our main result 
Theorem \ref{hunt2} is a definitive improvement of Theorem 3 in Ref. \cite{Tr5}. 
Applying here a quite different method than in Ref. \cite{Tr5} we were able to relax the algebra structure 
condition {\bf SD3} of Ref. \cite{Tr5} to the quite weaker linear structure condition {\bf D3}. 
Therefore our general analytic conditions
for non-sectorial Dirichlet forms to ensure the existence of an associated Hunt process are just {\bf D3} and the strict quasi-regularity.  
The state space is only assumed to be a Hausdorff topological space such that its Borel $\sigma$-algebra is 
generated by the set of continuous functions on the state space. Our result is hence the counterpart of IV. Theorem 2.2 in 
Ref. \cite{St1} for Hunt processes.\\
Finally let us briefly summarize the main contents of this paper. Section \ref{2} contains some preliminaries and the fundamental results. 
In particular, our way of defining the strict capacity (cf. Definition \ref{cap}) is more explicit than in V.2 of Ref. \cite{mr} and  Section 2 of Ref. \cite{mrz}, 
but still equivalent (see Remark \ref{all}). 
The strict capacity is defined w.r.t. some reference function $\varphi$, but it turns out to be like the $\varphi$-capacity independent of that function (see Remark \ref{all}(ii)). 
Proposition \ref{a} provides a useful new estimate for the strict capacity.
A crucial result is the construction of the modified functions $\widehat{e}_n$ in Lemma \ref{gf} in comparison to the functions $e_n$ of Lemma 3.5 in Ref. \cite{mrz}.  
This makes the difference and allows to handle the non-sectorial case (see also Remark \ref{enr} for some related explanations).
Lemma \ref{gf} allows to get the important tightness result of Lemma \ref{liq2}. Note that 
we also correct an inaccuracy that appears in the proofs of the statements corresponding to Lemma \ref{liq2} in both 
papers Ref. \cite{mrz} and Ref. \cite{mrs} (see Remark \ref{mis}) and that we partially improve results from Ref. \cite{mrz} (see e.g. paragraph in front of Proposition \ref{sp}).
Having developed the fundamental results of potential theory in Section 2, most of the results of Sections \ref{3} and \ref{4} 
follow by \lq\lq routine\rq\rq\ arguments from Ref. \cite{mr}, Ref. \cite{ek}, and Ref. \cite{get},
similarly to the line of arguments used in Ref. \cite{mrz}. For the sake of completeness and convenience of the reader we summarize these results.
\section{Strict quasi-regularity, strict capacity, and the construction of $R_{\alpha}$}\label{2}
For notations and notions that might not be defined here we refer to Ref. \cite{Tr5} and references therein. Throughout the paper let $E$ be a Hausdorff space such that its Borel $\sigma$-algebra ${\cal{B}}(E)$ is
generated by the set ${\cal{C}}(E)$ of all continuous functions on $E$, and let $m$ be a
$\sigma$-finite measure on $(E,{\cal{B}}(E))$. Let $({\cal{E}},{\cal{F}})$ be a generalized Dirichlet form with sectorial part $({\cal A},{\cal V})$ on ${\cal{H}}=L^2(E,m)$. 
Let $(G_\alpha)_{\alpha>0}$ be $L^2(E,m)$-{\it resolvent associated with} ${\cal{E}}$, and $(\widehat G_\alpha)_{\alpha >0}$ be 
the adjoint of $(G_\alpha)_{\alpha>0}$ in ${\cal{H}}$.
\subsection{Quasi-regular generalized Dirichlet forms and the conditions {\bf D3} and {\bf SD3}}\label{2.1} 
Given $\varphi\in L^2(E,m)$, $\varphi>0$, an increasing sequence of closed subsets $(F_k)_{k\ge 1}$ of $E$
is an ${\cal{E}}$-nest, if
$$
\mbox{Cap}_{\varphi}(F_k^c)=\int_E (G_1 \varphi)_{F_k^c}\varphi\,dm\longrightarrow 0 \ \mbox{ as } k \to \infty.
$$ 
By IV. Proposition 2.10 in Ref. \cite{St1} the notion of ${\cal{E}}$-nest is independent of the special choice of $\varphi$.
Accordingly to $\mbox{Cap}_{\varphi}$, ${\cal{E}}$-exceptional sets, ${\cal{E}}$-quasi-continuity, etc., and the quasi-regularity are defined (see Ref. \cite{St1}). \\  
In contrast to the theory of sectorial Dirichlet forms in Ref. \cite{mr} and Ref. \cite{semi_df} it is not known whether 
quasi-regularity is sufficient for the existence of an associated standard process in case of a generalized Dirichlet form. 
Therefore the following condition  
\begin{itemize}
 \item[$\mathbf{D3}$]There exists a linear subspace $\mathcal{Y}\subset{\cal{H}}\cap L^{\infty}(E,m)$ such that $\mathcal{Y}\cap{\cal{F}}$ is dense in ${\cal{F}}$, 
 $\lim_{\alpha\rightarrow\infty}e_{\alpha G_{\alpha} u-u}=0$ in ${\cal{H}}$ for all $u\in\mathcal{Y}$ and for the closure $\overline{\mathcal{Y}}$ of $\mathcal{Y}$ 
 in $L^{\infty}(E;m)$ it follows that $u\wedge\alpha\in \overline{\mathcal{Y}}$ for $u\in \overline{\mathcal{Y}}$ and $\alpha \geq 0.$
\end{itemize}
is introduced in  IV.2 in Ref. \cite{St1} and it is shown in IV. Theorem 2.2 of Ref. \cite{St1} that a {\it quasi-regular generalized Dirichlet 
form satisfying} {\bf D3} 
is associated with an $m$-{\it tight special standard process}.\\
By an algebra of functions we understand a {\it linear} space that is closed under multiplication. The following condition
\begin{itemize}
\item[$\mathbf{SD3}$] There exists an algebra of functions $\mathcal{G}\subset {\cal{H}} \cap L^{\infty}(E,m)$ such that $\mathcal{G}\cap {\cal{F}}$ is dense in ${\cal{F}}$ and 
$\lim_{\alpha\rightarrow \infty}e_{\alpha G_{\alpha} u-u}$ in ${\cal{H}}$ for every $u\in\mathcal{G}.$
\end{itemize}
was introduced in Ref. \cite{Tr5}. We have the following: 
\begin{prop}\label{sd3}
It holds:
\begin{itemize}
	\item[(i)]  {\bf SD3} implies {\bf D3}.
	\item[(ii)] {\bf SD3} holds for any (sectorial semi-)Dirichlet form $({\cal{E}},D({\cal{E}}))$ on $L^2(E;m)$ with $\mathcal{G}=D({\cal{E}})\cap L^{\infty}(E,m)$.
\end{itemize}
\end{prop}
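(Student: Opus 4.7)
For part (i), I would simply take $\mathcal{Y} := \mathcal{G}$. An algebra is by definition a linear subspace, so the containment $\mathcal{Y} \subset \mathcal{H} \cap L^{\infty}(E,m)$, the density of $\mathcal{Y} \cap \mathcal{F}$ in $\mathcal{F}$, and the resolvent convergence $\lim_{\alpha\to\infty} e_{\alpha G_{\alpha} u - u} = 0$ all transfer verbatim from \textbf{SD3}. The only remaining task is to verify the lattice-closure condition: for every $u \in \overline{\mathcal{Y}}$ and every $\alpha \geq 0$ one has $u \wedge \alpha \in \overline{\mathcal{Y}}$. Since $\mathcal{G}$ is an algebra, its $L^{\infty}$-closure $\overline{\mathcal{Y}}$ is a closed subalgebra of $L^{\infty}(E,m)$. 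Given $u \in \overline{\mathcal{Y}}$ with $\|u\|_{\infty} \leq M$, the continuous function $\phi(t) := t \wedge \alpha$ on $[-M,M]$ satisfies $\phi(0) = 0$ because $\alpha \geq 0$; hence by Weierstrass approximation there exist polynomials $p_n$ with $p_n(0) = 0$ and $p_n \to \phi$ uniformly on $[-M,M]$. Each $p_n(u)$ is a polynomial in $u$ without constant term and therefore lies in the closed algebra $\overline{\mathcal{Y}}$, while $p_n(u) \to u \wedge \alpha$ in $L^{\infty}$, giving $u \wedge \alpha \in \overline{\mathcal{Y}}$.

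For part (ii), set $\mathcal{G} := D(\mathcal{E}) \cap L^{\infty}(E,m)$; then the three requirements of \textbf{SD3} become: (a) $\mathcal{G}$ is an algebra; (b) $\mathcal{G} \cap \mathcal{F} = \mathcal{G}$ is dense in $\mathcal{F} = D(\mathcal{E})$, since for a sectorial semi-Dirichlet form the whole form coincides with its sectorial part $\mathcal{V}$; (c) $\lim_{\alpha\to\infty} e_{\alpha G_{\alpha} u - u} = 0$ in $\mathcal{H}$ for $u \in \mathcal{G}$. Property (a) is the classical product-rule bound $\mathcal{E}(uv,uv) \leq C(\|u\|_{\infty}^2\,\mathcal{E}(v,v) + \|v\|_{\infty}^2\,\mathcal{E}(u,u))$ for sectorial semi-Dirichlet forms, which follows from the weak sector condition together with the contraction property. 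Property (b) is obtained by truncating via the normal contractions $\phi_n(t) = (-n) \vee (t \wedge n)$ and using that $\phi_n(u) \to u$ in the $\mathcal{E}_1^{1/2}$-norm for every $u \in D(\mathcal{E})$. For (c), one has $\alpha G_{\alpha} u \to u$ in the Dirichlet-form norm (in particular in $\mathcal{H}$) as $\alpha \to \infty$, and the continuity of $v \mapsto e_v$ on $\mathcal{H}$ at $v = 0$ then yields $e_{\alpha G_{\alpha} u - u} \to 0$ in $\mathcal{H}$.

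The only genuinely delicate point is the Weierstrass step in (i): the approximating polynomials must vanish at $0$ in order to stay inside the (possibly non-unital) algebra generated by $u$, and this is possible precisely because $\phi(0) = \min(0,\alpha) = 0$ when $\alpha \geq 0$. In this sense the sign restriction $\alpha \geq 0$ in \textbf{D3} is exactly what makes the reduction from \textbf{SD3} to \textbf{D3} go through.
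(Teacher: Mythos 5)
Your part (i) is correct and is exactly the standard reduction the paper has in mind (it simply cites the proof of IV.~Proposition 2.1 in Ref.~\cite{St1}): take $\mathcal{Y}=\mathcal{G}$, observe that the $L^{\infty}$-closure of an algebra is again an algebra, and approximate $t\mapsto t\wedge\alpha$ uniformly on $[-M,M]$ by polynomials vanishing at $0$, which is possible precisely because $\alpha\ge 0$. No objection there.

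In part (ii), however, step (c) as written contains a genuine gap: the map $v\mapsto e_v$ is \emph{not} continuous on $\mathcal{H}$ at $v=0$. Here $e_v$ is the smallest $1$-excessive majorant of $v$, and smallness of $\|v\|_{\mathcal{H}}$ does not control $\|e_v\|_{\mathcal{H}}$: for instance $v=\lambda 1_A$ with $m(A)$ tiny but $A$ of non-small capacity has small $L^2$-norm, while $e_v$ dominates $\lambda$ times an equilibrium-type potential of $A$, whose $L^2$-norm is of order $\lambda$ and not of order $\lambda m(A)^{1/2}$. So the deduction \lq\lq$\alpha G_{\alpha}u-u\to 0$ in $\mathcal{H}$, hence $e_{\alpha G_{\alpha}u-u}\to 0$ in $\mathcal{H}$\rq\rq\ does not go through as justified. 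The correct continuity is with respect to the form norm: for $w$ in the form domain one has the reduced-function estimate $\widetilde{\mathcal{E}}_1(e_w,e_w)^{1/2}\le K\,\widetilde{\mathcal{E}}_1(w,w)^{1/2}$ ($K$ the sector constant), hence $\|e_w\|_{\mathcal{H}}\le K\,\widetilde{\mathcal{E}}_1(w,w)^{1/2}$, and combining this with $\alpha G_{\alpha}u\to u$ with respect to $\widetilde{\mathcal{E}}_1^{1/2}$ (which you do invoke) yields $e_{\alpha G_{\alpha}u-u}\to 0$ in $\mathcal{H}$. This form-norm bound for the reduction operator is exactly the nontrivial ingredient that the paper's citation of Ref.~\cite{mr} (I.~Corollary 4.15 for the algebra property, I.~Proposition 4.17(ii) together with the standard reduction estimates for the rest) is meant to supply; your points (a) and (b) are fine modulo the classical references, with the minor caveat that for non-symmetric forms the truncations $\phi_n(u)$ are most safely handled via weak form-convergence plus the linearity of $\mathcal{G}$ rather than an unproved strong $\widetilde{\mathcal{E}}_1^{1/2}$-convergence.
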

\begin{proof}
(i) The proof is standard; cf. e.g. proof of IV. Proposition 2.1 in Ref. \cite{St1}.\\
(ii) Follows from I.Corollary 4.15 and Proposition 4.17(ii) in Ref. \cite{mr}. 
\end{proof}

\subsection{Strict capacities and strictly quasi-regular generalized Dirichlet forms}
We fix $\varphi\in L^1(E,m)\cap {\cal{B}}$ with $0<\varphi(z)\leq 1$ for every $z\in E$. The 
following definition is a notational simplification of Definition 1 of Ref. \cite{Tr5}.
\begin{defn}\label{cap}
For $U\subset E$, $U$ open, set 
$$
\mbox{Cap}_{1,\widehat G_1 \varphi}(U):=\int_E e_{U} \varphi\, dm
$$ 
where $e_U:=\lim_{k\rightarrow\infty}(1\wedge G_1(k\varphi))_U$ exists as a bounded and increasing limit in $L^{\infty}(E,m)$. If 
$A\subset E$ arbitrary then 
$\mbox{Cap}_{1,\widehat G_1 \varphi}(A):=\inf\{\mbox{Cap}_{1,\widehat G_1 \varphi}(U)| \,U\supset A, U \mbox{open}\}$. 
\end{defn}
By Theorem 1 of Ref. \cite{Tr5} $\mbox{Cap}_{1,\widehat G_1 \varphi}$ is a finite Choquet capacity. A priori the function $e_U$ depends on the chosen $\varphi$ but in the next lemma we will see that this is actually not the case.
\begin{lem}\label{capeq}
 $U\subset E$, $U$ open.
 \begin{itemize}
\item[(i)] We have
$$
\mbox{Cap}_{1,\widehat G_1 \varphi}(U)=\sup_{u\in {\cal{P}}_{\cal{F}},u\le 1}{\cal{E}}_1(u_U,\widehat G_1 \varphi),
$$
where ${\cal{P}}$ denotes the $1$-excessive elements of ${\cal{V}}$, and ${\cal{P}}_{\cal{F}}=\{u\in {\cal{P}}|\exists f\in{\cal{F}}, u\le f\}$.
\item[(ii)] It holds 
$$
e_U=\esssup\{u_U\,|\,u\in {\cal{P}}_{\cal{F}},u\le 1\}.
$$
\end{itemize}
\end{lem}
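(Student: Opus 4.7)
The plan is to prove (ii) first and deduce (i) from it. Part (i) follows by integrating the essential supremum from (ii) against $\varphi$ and using monotone convergence along the explicit increasing sequence $(1 \wedge G_1(k\varphi))_U \uparrow e_U$, together with the co-resolvent identity $\mathcal{E}_1(v, \widehat G_1 \varphi) = (v, \varphi)_{L^2(E,m)}$ valid for $v \in \mathcal{F}$. So the real work lies in (ii), which I will establish by a pair of matching inequalities.

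The direction $e_U \le \esssup\{u_U : u \in \mathcal{P}_{\mathcal{F}},\, u \le 1\}$ is the easy one: for every $k \ge 1$, the truncation $1 \wedge G_1(k\varphi)$ is $1$-excessive, bounded above by the potential $G_1(k\varphi) \in \mathcal{F}$, and bounded by $1$, so it lies in the indexing family; hence $(1 \wedge G_1(k\varphi))_U$ is dominated by the essential supremum, and letting $k \to \infty$ yields the claim. For the reverse inequality, I fix $u \in \mathcal{P}_\mathcal{F}$ with $u \le 1$ and consider the truncations $u_k := u \wedge G_1(k\varphi)$. Since $\mathcal{P}$ is closed under $\wedge$, each $u_k$ is $1$-excessive; as $u_k \le G_1(k\varphi) \in \mathcal{F}$ one has $u_k \in \mathcal{P}_\mathcal{F}$; and $u \le 1$ gives $u_k \le 1 \wedge G_1(k\varphi)$, so by monotonicity of the reduction $(u_k)_U \le (1 \wedge G_1(k\varphi))_U$ m-a.e. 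Because $G_1(k\varphi) = k\,G_1\varphi \to +\infty$ m-a.e.\ (using $\varphi > 0$ everywhere together with positivity preservation of $G_1$), $u_k \uparrow u$ m-a.e.; continuity of the reduction along monotone increasing limits of $1$-excessive functions then delivers $(u_k)_U \uparrow u_U$, whence $u_U \le e_U$ m-a.e.

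The main obstacle I anticipate is verifying, in the \emph{generalized} (non-sectorial) Dirichlet form setting, the structural facts on which the argument rests: the lattice property of $\mathcal{P}$, the monotone continuity $(u_k)_U \uparrow u_U$ along $u_k \uparrow u$ in $\mathcal{P}_\mathcal{F}$, and the co-resolvent identity evaluated on $u_U \in \mathcal{F}$. These are classical in the sectorial theory (cf. Ma--R\"ockner, Chapter I) but in the present framework they have to be drawn from Stannat's analysis in Ref. \cite{St1}, where the reduction is defined variationally rather than as a pointwise balayage. A secondary point is the strict positivity $G_1\varphi > 0$ m-a.e., which follows from $\varphi > 0$ everywhere and standard properties of the $L^2$-resolvent of a generalized Dirichlet form.
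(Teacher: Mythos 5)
Your overall architecture is the reverse of the paper's: you prove (ii) directly by monotone approximation and then integrate to get (i), whereas the paper proves (i) first, using the duality formula (9) of Ref. \cite{Tr2} to rewrite ${\cal{E}}_1(u_U,\widehat G_1\varphi)$ as $\sup_{\alpha\ge 1}{\cal{E}}_1(u,(\widehat G_1\varphi)_U^{\alpha})$ (so that only monotonicity in $u$ and $\sup_{k}1\wedge G_1(k\varphi)=1$ $m$-a.e.\ are needed), and then deduces (ii) by contradiction: if some $f\in{\cal{P}}_{\cal{F}}$, $f\le 1$, had $f_U>e_U$ on a set of positive $m$-measure, then the functions $\big(f\vee(1\wedge G_1(k\varphi))\big)_U\in{\cal{P}}_{\cal{F}}$, $\le 1$, would violate the bound from (i). The problem with your route is that its central step is exactly the statement the paper's design is built to avoid: the monotone continuity of the reduction in the \emph{function} argument, $(u_k)_U\uparrow u_U$ for $u_k\uparrow u$. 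You flag it as an obstacle and defer it to Ref. \cite{St1}, but no such result is cited there or in Ref. \cite{Tr2}, and it is not clearly obtainable: the natural argument (the increasing limit $w:=\sup_k (u_k)_U$ is $1$-excessive, dominates $u$ $m$-a.e.\ on $U$, hence $w\ge u_U$ by minimality) requires $u_U$ to be the smallest $1$-excessive majorant of $u\cdot 1_U$ among \emph{all} $1$-excessive functions in ${\cal{H}}$, whereas in the generalized framework the reduced function is constructed variationally and its minimality is formulated within ${\cal{P}}$; the limit $w$ need not lie in ${\cal{V}}$, and nothing gives a uniform ${\cal{V}}$-bound on $(u_k)_U$. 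The same issue affects your preliminary claim that ${\cal{P}}$ is stable under $\wedge$: $1$-excessivity of $u\wedge G_1(k\varphi)$ in the sense $\alpha G_{\alpha+1}h\le h$ is immediate, but membership in ${\cal{V}}$ is not, since the defining contraction property of a generalized Dirichlet form only yields $v^+\wedge 1\in{\cal{V}}$, not lattice stability. So the hard direction of (ii) is left resting on unestablished structural facts; this is a genuine gap, not a routine citation.

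Two smaller points. First, your deduction of (i) from (ii) uses ${\cal{E}}_1(u_U,\widehat G_1\varphi)=\int_E u_U\,\varphi\,dm$, i.e.\ it applies the co-resolvent identity to $v=u_U$; this needs $u_U\in{\cal{F}}$ (or at least ${\cal{V}}$), which again is a fact about the reduction theory that should be quoted explicitly — the paper instead works throughout with the pairing in the form given by (9) of Ref. \cite{Tr2}, where it is well defined. Second, your easy direction of (ii) and the lower bound in (i) (via $1\wedge G_1(k\varphi)\le G_1(k\varphi)\in{\cal{F}}$ and monotone convergence) coincide with the paper's observations and are fine, as is the positivity argument $G_1\varphi>0$ $m$-a.e., which the paper proves via I.~Proposition 3.4 and I.~Remark 4.2 of Ref. \cite{St1}. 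If you want to keep your order of proof, you would have to first establish the minimality of $u_U$ over all $1$-excessive elements of ${\cal{H}}$ dominated by an element of ${\cal{F}}$ (or otherwise prove the monotone continuity of $h\mapsto h_U$); absent that, the paper's $\vee$-contradiction argument combined with part (i) is the safer path.
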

\begin{proof}
(i) Clearly \lq\lq$\le$\rq\rq\ holds in the statement. 
For $f\in L^2(E;m)$ it is not difficult to see that $G_1 f>0$ $m$-a.e. whenever $f>0$ $m$-a.e. Indeed, since $m$ is $\sigma$-finite and $(\widehat{G}_{\alpha})_{\alpha>0}$ is 
positivity preserving by I. Remark 4.2 of Ref. \cite{St1}, we can easily show by I. Proposition 3.4 in Ref. \cite{St1} that 
if $A\in {\cal B}(E)$ and $\int_A G_1 f \,dm=0$, then $m(A)=0$. Thus $\sup_{k\ge 1}1\wedge G_1(k \varphi)=1$ $m$-a.e.
But then we have by (9) of Ref. \cite{Tr2}
\begin{eqnarray*}
\sup_{u\in {\cal{P}}_{\cal{F}},u\le 1}{\cal{E}}_1(u_U,\widehat G_1 \varphi) & = & \sup_{u\in {\cal{P}}_{\cal{F}},u\le 1}\sup_{\alpha\ge 1}{\cal{E}}_1(u,(\widehat G_1 \varphi)_U^{\alpha})\\
& \le & \sup_{\alpha\ge 1}\sup_{k\ge 1}{\cal{E}}_1(1\wedge G_1(k \varphi),(\widehat G_1 \varphi)_U^{\alpha})=\mbox{Cap}_{1,\widehat G_1 \varphi}(U).\\
\end{eqnarray*}
(ii) Define ${\cal{P}}_{\cal{F}}^{1}:=\{u\in {\cal{P}}_{\cal{F}}\,|\,u\le 1\}$. Since $e_U=\esssup\{\big (1\wedge G_1(k\varphi)\big )_U\,|\, k\ge 1\}$, it is enough to show that $e_U\ge u_U$ for every $u\in {\cal{P}}_{\cal{F}}^{1}$. Suppose this is wrong. Then there is some $f\in {\cal{P}}_{\cal{F}}^{1}$ such that
\begin{equation}\label{zwei}
\int_E \left (f_U\vee e_{U}\right ) \,\varphi\, dm>\int_E e_{U} \varphi\, dm=\mbox{Cap}_{1,\widehat G_1 \varphi}(U).
\end{equation}
On the other hand since obviously $f_U\vee \big (1\wedge G_1(k\varphi)\big )_U\le \big (f\vee \big (1\wedge G_1(k\varphi)\big )\big )_U\in {\cal{P}}_{\cal{F}}^{1}$ for any $k$ we obtain by (i)
\begin{eqnarray*}
\int_E \left (f_U\vee e_{U}\right ) \,\varphi\, dm & = & 
\lim_{k\rightarrow\infty}\int_E  \big (f_U\vee \big (1\wedge G_1(k\varphi)\big )_U \big ) \,\varphi\, dm\\
& \le & \limsup_{k\rightarrow\infty} \int_E  \big (f\vee \big (1\wedge G_1(k\varphi)\big )\big )_U\,\varphi\, dm\le \mbox{Cap}_{1,\widehat G_1 \varphi}(U),\\
\end{eqnarray*}
which contradics (\ref{zwei}).
\end{proof}
\begin{rem}\label{all}
\begin{itemize}
	\item[(i)] Let $\overline{\mbox{Cap}}_{1,\widehat G_1 \varphi}$ be the strict capacity 
as defined in V.2 of Ref. \cite{mr}. Then in the sectorial case, i.e. when $({\cal E}, {\cal F})$ is a Dirichet form, we have ${\cal{F}}={\cal{V}}$ and  ${\cal{P}}_{\cal{F}}={\cal{P}}$. 
Thus $\mbox{Cap}_{1,\widehat G_1 \varphi}=\overline{\mbox{Cap}}_{1,\widehat G_1 \varphi}$ by Lemma \ref{capeq} and Definition V.2.1 in \cite{mr}. 
Moreover, the function $e_U$ defined in Definition \ref{cap} is an explicit realization of the functon $e_U$ of Lemma 2.2 in Ref. \cite{mr}.
	\item[(ii)] It follows immediately from Lemma \ref{capeq} that the strict ${\cal{E}}$-nests, and hence the strict notions, do not depend on the  special choice of $\varphi$. 
\end{itemize}
\end{rem}
Adjoining the cemetery $\Delta$ to $E$ we let $E_{\Delta}:=E\cup\{\Delta\}$ and ${\cal{B}}(E_{\Delta})={\cal{B}}(E)\cup\{B\cup \{\Delta\}|B\in{\cal{B}}(E)\}$.
 We will consider different topologies on 
$E_{\Delta}$. 
If $E$ is a locally compact separable metric space but not compact, 
$E_{\Delta}$ will be the one point compactification of $E$, i.e. the open sets of $E_{\Delta}$ are the open sets of $E$ 
together with the sets of the form $E_{\Delta}\setminus K$, $K\subset E$, $K$ compact in $E$. Otherwise 
we adjoin the cemetery $\Delta$ to $E$ as an isolated point. 
We extend 
$m$ to $(E_{\Delta},{\cal{B}}(E_{\Delta}))$ by setting $m(\{\Delta\})=0$. 
Any real-valued function u on $E$ is extended to $E_{\Delta}$ by setting $u(\Delta)=0$.  
\\
Given an increasing sequence $(F_k)_{k\ge 1}$ of closed subsets of $E$, we define
$$
C_{l,\infty}(\{F_k\})=\{f:A\rightarrow \mathbb{R}\mid \bigcup_{k\ge 1}F_k\subset A\subset E,\, 
f_{\mid F_k\cup \{\Delta\}} \mbox{ is lower semicontinuous } \forall k\},
$$ 
and $C(\{F_k\})$, $C_{\infty}(\{F_k\})$ as on page 360 of Ref. \cite{Tr5}.\\
Accordingly to $\mbox{Cap}_{1,\widehat G_1 \varphi}$, the notions strictly ${\cal{E}}$-exceptional (s.${\cal{E}}$-exceptional), strict ${\cal{E}}$-nest (s.${\cal{E}}$-nest), 
strictly ${\cal{E}}$-quasi-everywhere (s.${\cal{E}}$-q.e.), strictly ${\cal{E}}$-quasi-continuous (s.${\cal{E}}$-q.c.), and strictly ${\cal{E}}$-quasi-lower-semicontinuous (s.${\cal{E}}$-q.l.s.c.), are defined (see Ref. \cite{Tr5}).
We observe that Proposition 2(i) in Ref. \cite{Tr5} can be generalized as follows: 
\begin{prop}\label{a}
Let $u\in {\cal{H}}$ with s.${\cal{E}}$-q.l.s.c. $m$-version $\widetilde{u}$ and suppose further that $e_u$ exist. 
Then for any $\varepsilon>0$
\begin{eqnarray*}
\mbox{Cap}_{1,\widehat G_1 \varphi}(\{\widetilde{u} >\varepsilon\})\le \varepsilon^{-1}
\int_E e_u \,\varphi\,dm.
\end{eqnarray*}
\end{prop}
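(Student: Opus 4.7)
The plan is to establish a Chebyshev-type estimate by combining the s.${\cal{E}}$-quasi-lower-semicontinuity of $\widetilde{u}$, which lets us approximate $\{\widetilde{u}>\varepsilon\}$ from outside by open sets inside an s.${\cal{E}}$-nest, with the variational characterization of $e_U$ provided by Lemma \ref{capeq}(ii).

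First, I would reduce to an estimate on open sets. By s.${\cal{E}}$-q.l.s.c.\ of $\widetilde{u}$ pick an s.${\cal{E}}$-nest $(F_k)_{k\ge 1}$ such that $\widetilde{u}_{|F_k\cup\{\Delta\}}$ is lower semicontinuous for each $k$. Then $\{\widetilde{u}>\varepsilon\}\cap F_k$ is relatively open in $F_k$, so there exists an open $U_k\subset E$ with $U_k\cap F_k=\{\widetilde{u}>\varepsilon\}\cap F_k$; up to replacing $U_k$ with $\bigcup_{j\le k}U_j$ we may assume $(U_k)$ is increasing. Since
$$
\{\widetilde{u}>\varepsilon\}\;\subset\;\Big(\bigcup_{k\ge 1}U_k\Big)\cup\Big(\bigcap_{k\ge 1}F_k^c\Big),
$$
and $\bigcap_{k}F_k^c$ is s.${\cal{E}}$-exceptional, monotonicity and countable subadditivity of $\mbox{Cap}_{1,\widehat G_1 \varphi}$ together with $\mbox{Cap}_{1,\widehat G_1 \varphi}(F_k^c)\to 0$ reduce the assertion to showing, for each $k$,
$$
\mbox{Cap}_{1,\widehat G_1 \varphi}(U_k)\;\le\;\varepsilon^{-1}\int_E e_u\,\varphi\,dm+\mbox{Cap}_{1,\widehat G_1 \varphi}(F_k^c),
$$
and letting $k\to\infty$.

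Next, I would apply Lemma \ref{capeq}(ii) to rewrite $e_{U_k}=\esssup\{v_{U_k}\mid v\in {\cal{P}}_{\cal{F}}^1\}$, and prove the above inequality by a pointwise Chebyshev bound for each such $v$. On $U_k\cap F_k$ we have $\varepsilon v\le\varepsilon\le\widetilde{u}=u$ $m$-a.e., because $v\le 1$ by assumption and $\widetilde{u}>\varepsilon$ there. Using the monotonicity of the $1$-reduite operation and the characterization of $e_u$ as the appropriate excessive envelope of $u$ (generalizing the construction of $e_U$ from Definition \ref{cap}, cf.\ Ref.\ \cite{Tr5}), the inequality $\varepsilon v\le u$ on $U_k\cap F_k$ lifts to $\varepsilon v_{U_k}\le e_u$ $m$-a.e.\ on $U_k\cap F_k$. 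On the complement, the trivial bound $v_{U_k}\le 1$ absorbs the error term $\mbox{Cap}_{1,\widehat G_1 \varphi}(F_k^c)$ after integration against $\varphi$. Taking the essential supremum over $v$ and invoking Lemma \ref{capeq}(i) then yields the desired estimate.

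The main obstacle is the second step: promoting the pointwise m-a.e.\ inequality $\varepsilon v\le u$ valid only on $U_k\cap F_k$ to an inequality between the $1$-reduites $\varepsilon v_{U_k}$ and $e_u$, with the discrepancy off $F_k$ rigorously controlled. This is precisely where one uses that $(F_k)$ is an s.${\cal{E}}$-nest so that $F_k^c$ contributes an error that vanishes as $k\to\infty$, and where the generalization from Proposition 2(i) of Ref.\ \cite{Tr5} (which likely handled a more restrictive class of $u$) to arbitrary $u\in{\cal H}$ with s.${\cal{E}}$-q.l.s.c.\ version and existing $e_u$ requires the fine characterization of $e_U$ established in Lemma \ref{capeq}.
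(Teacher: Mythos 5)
There is a genuine gap, and it sits exactly at the step you yourself label the main obstacle. Your inequality $\varepsilon v\le u$ holds $m$-a.e.\ only on $U_k\cap F_k$, but the reduite $v_{U_k}$ is formed over the whole open set $U_k$: its values on $U_k\cap F_k$ (and on $F_k\setminus U_k$, which your splitting of $\int_E v_{U_k}\,\varphi\,dm$ must also control but never mentions) are influenced by the values of $v$ on $U_k\setminus F_k$, where no comparison with $u$ is available. So ``monotonicity of the $1$-reduite operation'' does not yield $\varepsilon v_{U_k}\le e_u$ on $U_k\cap F_k$, and the crude bound $v_{U_k}\le 1$ off $F_k$ cannot repair this, because the problem lies inside $F_k$, not outside it. What you would really need is a reduite (capacitary function) attached to the non-open set $U_k\cap F_k$, i.e.\ precisely the object whose absence makes the level set $\{\widetilde u>\varepsilon\}$ delicate; Lemma \ref{capeq} describes $e_U$ only for open $U$ and does not provide such a localization. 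Two smaller points: replacing $U_k$ by $\bigcup_{j\le k}U_j$ destroys the identity $U_k\cap F_k=\{\widetilde u>\varepsilon\}\cap F_k$ (harmless, since increasingness is not needed), and passing from per-$v$ bounds to a bound on $\int_E e_{U_k}\varphi\,dm$ should be done via the increasing defining sequence $\big(1\wedge G_1(l\varphi)\big)_{U_k}$ from Definition \ref{cap} rather than an unordered essential supremum.

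The paper's proof avoids the whole open-neighbourhood construction by arguing dually: using the s.${\cal E}$-q.l.s.c.\ of $\widetilde u$ (this is the generalization of Proposition 2(i) in Ref.\ \cite{Tr5}) one bounds $\mbox{Cap}_{1,\widehat G_1 \varphi}(\{\widetilde{u} >\varepsilon\})$ by $\lim_{k\to\infty}\lim_{\alpha\to\infty}{\cal E}_1\big(1\wedge G_1(k\varphi),(\widehat G_1\varphi)^{\alpha}_{\{\widetilde u>\varepsilon\}}\big)$, where $(\widehat G_1\varphi)^{\alpha}_{\{\widetilde u>\varepsilon\}}$ is the $1$-coexcessive reduite of $\widehat G_1\varphi$ on the level set itself (defined for arbitrary sets, cf.\ (9) of Ref.\ \cite{Tr2}). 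The Chebyshev input is then the same as yours, namely $1\wedge G_1(k\varphi)\le 1\le \varepsilon^{-1}e_u$ $m$-a.e.\ on $\{\widetilde u>\varepsilon\}$, but the comparison is made through the pairing with this coexcessive function carried by the level set, and the monotonicity one needs is that of ${\cal E}_1(\cdot,\cdot)$ between $1$-excessive and $1$-coexcessive functions together with $(\widehat G_1\varphi)^{\alpha}_{\{\widetilde u>\varepsilon\}}\le\widehat G_1\varphi$, which gives the bound $\varepsilon^{-1}{\cal E}_1\big(e_u,\widehat G_1\varphi\big)=\varepsilon^{-1}\int_E e_u\,\varphi\,dm$. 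If you want to rescue your outline you would have to establish a reduite comparison localized to $U_k\cap F_k$, which in effect reproduces this dual argument; as written, the central lifting step is unproved and the nest only hides, rather than removes, the difficulty.
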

\begin{proof} We have 
\begin{eqnarray*}
\mbox{Cap}_{1,\widehat G_1 \varphi}(\{\widetilde{u} >\varepsilon\}) &\le& \lim_{k\to \infty}\lim_{\alpha\to \infty}{\cal{E}}_1(1\wedge G_1(k\varphi), (\widehat G_1 \varphi)^{\alpha}_{\{\widetilde{u} >\varepsilon\}}).
\end{eqnarray*}
Since $1\wedge G_1(k\varphi)\le 1\le \varepsilon^{-1} e_u$ $m$-a.e. on $\{\widetilde{u} >\varepsilon\}$ for any $k\in \mathbb{N}$, and 
$(\widehat G_1 \varphi)^{\alpha}_{\{\widetilde{u} >\varepsilon\}}$ is $1$-coexcessive, and since $e_u$ is $1$-excessive and 
 $(\widehat G_1 \varphi)^{\alpha}_{\{\widetilde{u} >\varepsilon\}}\le \widehat G_1 \varphi$ $m$-a.e. for any $\alpha>0$ the result 
 now easily follows.
\end{proof}\\
{\bf From now on} we fix a {\it generalized Dirichlet form $({\cal{E}},{\cal{F}})$ that is strictly quasi-regular} 
(see Definition 2 in Ref. \cite{Tr5}). 
Using Proposition \ref{a} it is not difficult to see that strict versions of statements in Ref. \cite{St1} hold as stated in the following Lemma \ref{l1}. However, we remark that the strict quasi-regularity in Lemma \ref{l1} is only used to ensure the existence of a strict ${\cal{E}}$-nest of compact metrizable sets for the proof of (ii) (cf. III. Proposition 3.2 in Ref. \cite{St1}).
\begin{lem}\label{l1}
\begin{itemize}
	\item[(i)] Let $S$ be a countable family of s.${\cal{E}}$-q.c. functions (resp. s.${\cal{E}}$-q.l.s.c. functions). Then there exists 
	a s.${\cal{E}}$-nest $(F_k)_{k\geq 1}$ such that $S\subset C_{\infty}(\{F_k\})$ (resp. $S\subset C_{l,\infty}(\{F_k\})$ ). 
	\item[(ii)] If $f$ is s.${\cal{E}}$-q.s.l.c. and $f\leq 0$ $m$-a.e. on an open set $U\subset E$, then $f\leq 0$ s.${\cal{E}}$-q.e. on $U.$ 
	If $f,g$ are s.${\cal{E}}$-q.c. and $f=g$ $m$-a.e. on an open set $U\subset E$, then $f=g$ s.${\cal{E}}$-q.e. on $U.$
\item[(iii)]  Let $u_n\in {\cal{H}}$ with s.${\cal{E}}$-q.c. $m$-version $\widetilde{u}_n$, $n\ge 1$, such that $e_{u_n-u}+e_{u-u_n}\to 0$ in ${\cal{H}}$ as $n\to \infty$ 
for some $u\in{\cal{H}}$. Then there is a subsequence $(\widetilde{u}_{n_k})_{k\ge 1}$ and a s.${\cal{E}}$-q.c. $m$-version $\widetilde{u}$ of $u$ such that $\lim_{k\ge 1}\widetilde{u}_{n_k}=\widetilde{u}$ 
s.${\cal{E}}$-quasi-uniformly. 
\item[(iv)] Let $u_n\in {\cal{F}}$ with s.${\cal{E}}$-q.c. $m$-version $\widetilde{u}_n$, $n\ge 1$, and $u_n\to u$ in ${\cal{F}}$.  Then 
there is a subsequence $(\widetilde{u}_{n_k})_{k\ge 1}$ and a s.${\cal{E}}$-q.c. $m$-version $\widetilde{u}$ of $u$ such that $\lim_{k\ge 1}\widetilde{u}_{n_k}=\widetilde{u}$ 
s.${\cal{E}}$-quasi-uniformly. 
\end{itemize}
\end{lem}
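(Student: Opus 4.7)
The plan is to transfer the standard versions of these four assertions from IV.2 of \cite{St1} to the strict setting, using Proposition \ref{a} as a replacement for the classical capacity bound $\mbox{Cap}_{\varphi}(\{\widetilde u>\varepsilon\})\le \varepsilon^{-2}{\cal E}_1(u,u)$, and leveraging the strict ${\cal{E}}$-nest of compact metrizable sets coming from strict quasi-regularity (cf. III. Proposition 3.2 of \cite{St1}).

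For (i), enumerate $S=\{f_n\}_{n\ge 1}$, pick for each $n$ a strict ${\cal E}$-nest $(F_k^{(n)})_{k\ge 1}$ making $f_n|_{F_k^{(n)}\cup\{\Delta\}}$ continuous (resp. l.s.c.), and perform a diagonal selection of indices $k_n^{(l)}$, monotone in $l$, with $\mbox{Cap}_{1,\widehat G_1\varphi}(E\setminus F_{k_n^{(l)}}^{(n)})\le 2^{-(n+l)}$. Setting $F_l:=\bigcap_{n\ge 1} F_{k_n^{(l)}}^{(n)}$ and invoking countable subadditivity of the Choquet capacity (Theorem 1 of \cite{Tr5}) yields $\mbox{Cap}_{1,\widehat G_1\varphi}(E\setminus F_l)\le 2^{-l}$, while $F_l\subset F_{k_n^{(l)}}^{(n)}$ transports the continuity (resp. l.s.c.) of each $f_n$ to $F_l\cup\{\Delta\}$.

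For (ii), combine (i) with strict quasi-regularity to fix a common strict nest $(F_k)$ of compact metrizable sets along which $f$ is l.s.c. Then $\{f>\varepsilon\}\cap F_k$ is open in $F_k$, and since $\mbox{Cap}_{1,\widehat G_1\varphi}(\{f>\varepsilon\}\cap U)\le \mbox{Cap}_{1,\widehat G_1\varphi}(\{f>\varepsilon\}\cap U\cap F_k)+\mbox{Cap}_{1,\widehat G_1\varphi}(E\setminus F_k)$ with the second summand tending to $0$, it suffices to show the first vanishes for every $k$ and every $\varepsilon>0$. This is where I expect the \textbf{main obstacle}: one must enclose the open-in-$F_k$ set $\{f>\varepsilon\}\cap U\cap F_k$, which has zero $m$-measure by hypothesis, inside an open-in-$E$ subset $V\subset U$ of arbitrarily small $m$-measure, despite the absence of a global metric on $E$. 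Second countability of the subspace topology on the compact metrizable $F_k$ together with outer $m$-regularity of the restricted measure permits a Lindel\"of style covering construction of such $V$, after which Proposition \ref{a} (applied to an $L^2$ surrogate of $1_V$ so that $e_{\cdot}$ is well defined and controllable by $m(V)$) drives the capacity to $0$ as $m(V)\downarrow 0$. The assertion for $f=g$ on $U$ then follows by applying the first part to $f-g$ and $g-f$.

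For (iii), extract a subsequence $(u_{n_k})$ with $\|e_{u_{n_k}-u_{n_{k+1}}}\|_{\cal H}+\|e_{u_{n_{k+1}}-u_{n_k}}\|_{\cal H}\le 2^{-3k}$. Since $\widetilde u_{n_k}-\widetilde u_{n_{k+1}}$ and its negative are s.${\cal E}$-q.c., hence s.${\cal E}$-q.l.s.c., $m$-versions of $u_{n_k}-u_{n_{k+1}}$ and $u_{n_{k+1}}-u_{n_k}$, Proposition \ref{a} with $\varepsilon=2^{-k}$ gives $\mbox{Cap}_{1,\widehat G_1\varphi}(\{|\widetilde u_{n_k}-\widetilde u_{n_{k+1}}|>2^{-k}\})\le C\cdot 2^{-k}$. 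Combining the complements of $N_l:=\bigcup_{k\ge l}\{|\widetilde u_{n_k}-\widetilde u_{n_{k+1}}|>2^{-k}\}$ with a strict nest for $\{\widetilde u_{n_k}\}_{k\ge 1}$ from (i) produces a strict nest outside of which $(\widetilde u_{n_k})$ is uniformly Cauchy by a telescoping estimate; the uniform limit $\widetilde u$ is s.${\cal E}$-q.c., and (ii) identifies it as an $m$-version of $u$. For (iv), one additionally observes that $u_n\to u$ in ${\cal F}$ forces $e_{u_n-u}+e_{u-u_n}\to 0$ in ${\cal H}$ via the standard norm comparison for ${\cal F}$-convergent sequences, reducing (iv) to (iii).
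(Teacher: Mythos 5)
Your parts (i), (iii) and (iv) are the standard transfer argument and are consistent with what the paper (which gives essentially no details, referring to strict versions of statements in \cite{St1} via Proposition \ref{a}) has in mind: diagonal intersection of strict nests plus countable subadditivity of the Choquet capacity for (i); a Borel--Cantelli type argument from Proposition \ref{a} for (iii), where you should add the remark that $e_{u_{n_k}-u_{n_{k+1}}}$ exists and is dominated by $e_{u_{n_k}-u}+e_{u-u_{n_{k+1}}}$ so that Proposition \ref{a} is indeed applicable to the telescoping differences; and the reduction of (iv) to (iii) through the bound of $e_{u_n-u}+e_{u-u_n}$ by the ${\cal{F}}$-norm, which is a legitimate citation-level step.

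The genuine gap is in (ii). Your plan is to enclose the relatively open, $m$-null set $\{f>\varepsilon\}\cap U\cap F_k$ in an open set $V\subset U$ with $m(V)$ small and to claim that Proposition \ref{a}, applied to an $L^2$ surrogate of $1_V$, lets the capacity be \emph{driven to $0$ as $m(V)\downarrow 0$}. This step is false: smallness of $m(V)$ gives no control whatsoever on $\mbox{Cap}_{1,\widehat G_1 \varphi}(V)$, nor on $\int_E e_u\,\varphi\,dm$ for any $u$ with $\widetilde u\ge 1$ on $V$ --- that integral is itself a capacity-type quantity, and the whole point of a capacity is that it is strictly finer than the measure (already for the classical Dirichlet form of Brownian motion, open neighbourhoods of a fixed compact set of positive capacity have arbitrarily small Lebesgue measure while their capacities stay bounded below). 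So the mechanism \lq\lq measure-small $\Rightarrow$ capacity-small\rq\rq\ on which your proof of (ii) rests is not available. The workable route is different: since $f_{\mid F_k\cup\{\Delta\}}$ is l.s.c., write $\{f>\varepsilon\}\cap F_k=O\cap F_k$ with $O\subset E$ open; then $\{f>\varepsilon\}\cap U\subset (O\cap U)\cup F_k^c$, and this \emph{open} set coincides with $F_k^c$ up to the $m$-null set $O\cap U\cap F_k$ (here the hypothesis $f\le 0$ $m$-a.e.\ on $U$ enters). Since the function $e_W$ of Definition \ref{cap} is built from reduced functions and therefore depends on the open set $W$ only through its $m$-equivalence class, one obtains $\mbox{Cap}_{1,\widehat G_1 \varphi}(\{f>\varepsilon\}\cap U)\le\mbox{Cap}_{1,\widehat G_1 \varphi}(F_k^c)\rightarrow 0$ as $k\to\infty$: the capacity-smallness is inherited from the strict nest (this is where the paper's remark about the strict nest furnished by strict quasi-regularity enters, cf.\ III.\ Corollary 3.4 in \cite{St1}), not manufactured from measure-smallness of a covering set. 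Taking a union over $\varepsilon=1/j$ gives the first claim, and the second claim then follows, as you say, by applying the first to $f-g$ and $g-f$.
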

By strict quasi-regularity one can find a strict ${\cal{E}}$-nest of compact metrizable sets $(E_k)_{k\ge 1}$ as in IV. Lemma 1.10 in Ref. \cite{St1} (see also Ref. \cite{mr}). Let
$$
Y_1:=\bigcup_{k\in \mathbb{N}} E_k.
$$ 
Then $Y_1$ is a Lusin space. Since 
$E\setminus Y_1$ is strictly ${\cal{E}}$-exceptional it is ${\cal{E}}$-exceptional, hence $m(E\setminus Y_1)=0$ and 
we may identify $L^2(E;m)$ with $L^2(Y_1,m)$ canonically. \\
By Lemma 2 in Ref. \cite{Tr5} we know that for any $\alpha>0$ there exists a kernel $\widetilde{R}_{\alpha}$ 
from $(E,{\cal{B}}(E))$ to $(Y_1,{\cal{B}}(Y_1))$ such that 
\begin{enumerate}
\item[(R1)] $\alpha\widetilde{R}_{\alpha}(z,Y_1)\le 1$ for all $z\in E$. 
\item[(R2)] $\widetilde{R}_{\alpha}f$ is a s.${\cal{E}}$-q.c. $m$-version of $G_{\alpha}f$ for all (measurable) $f\in {\cal{H}}$. 
\end{enumerate}
Moreover, the kernel $\widetilde{R}_{\alpha}$ is unique in the sense that, if $K$ is another kernel 
from $(E,{\cal{B}}(E))$ to $(Y_1,{\cal{B}}(Y_1))$ satisfying $(R1)$ and $(R2)$, it follows that 
$K(z,\cdot)=\widetilde{R}_{\alpha}(z,\cdot)$ s.${\cal{E}}$-q.e. \\ 
The next Proposition \ref{sp}(ii) is even when applied to the sectorial case an improvement over Lemma 3.4 in Ref. \cite{mrz} since we can choose 
the function $h$ of Lemma 3.4 in Ref. \cite{mrz} to be in the domain of the infinitesimal generator. Note also that Proposition \ref{sp}(ii) is a statement about existence and  not stated for any $\varphi$ with the given properties.
\begin{prop}\label{sp}
\begin{itemize}
	\item[(i)] Let $(u_n)_{n\ge 1}\subset {\cal{H}}$ densely. Then $\{\widetilde{R}_{1}u_n^+, \widetilde{R}_{1}u_n^-; n\ge 1\}$ 
	separates the points of $E_{\Delta}\setminus N$, where $N$ is some s.${\cal{E}}$-exceptional set.
	\item[(ii)] There is some $\varphi\in L^1(E,m)\cap {\cal{B}}$ such that $0<\varphi(z)\le 1$ for every $z\in E$, and such that $\widetilde{R}_{1}\varphi >0$ s.${\cal{E}}$-q.e.
\end{itemize}
\end{prop}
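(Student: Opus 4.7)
For part (i) the plan is to combine Lemma \ref{l1}(i) with the density of $(u_n)$ and a measure-theoretic argument. First I would apply Lemma \ref{l1}(i) to the countable family $\{\widetilde{R}_1 u_n^+, \widetilde{R}_1 u_n^- : n \ge 1\}$ of s.${\cal{E}}$-q.c.\ functions to obtain a s.${\cal{E}}$-nest $(F_k)_{k \ge 1}$ on which every member is continuous; by intersecting with the compact metrizable nest $(E_k)$ coming from strict quasi-regularity, we may assume each $F_k$ is compact metrizable, so that $\bigcup_k F_k$ is Lusin. Setting $N_0 := E_\Delta \setminus (\bigcup_k F_k \cup \{\Delta\})$ provides the first s.${\cal{E}}$-exceptional piece of the required $N$.

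The crux is then to enlarge $N_0$ to an s.${\cal{E}}$-exceptional $N$ outside which the family separates points. For $z_1, z_2 \in E_\Delta \setminus N_0$ with all coordinates in the family agreeing, linearity gives $\int u_n\,d\widetilde{R}_1(z_1,\cdot) = \int u_n\,d\widetilde{R}_1(z_2,\cdot)$ for every $n$. Using the $\sigma$-finiteness of $m$, pick $A_j \uparrow Y_1$ with $m(A_j) < \infty$; then $\mathbf{1}_{A_j \cap B} \in {\cal{H}}$ for every Borel $B \subset Y_1$, and one approximates these characteristic functions by the $(u_n)$ and passes to the limit via the s.${\cal{E}}$-quasi-uniform convergence of Lemma \ref{l1}(iii) along a subsequence (on a further-refined nest), concluding $\widetilde{R}_1(z_1,\cdot) = \widetilde{R}_1(z_2,\cdot)$ as subprobability measures on $Y_1$. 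The main obstacle is then to show the twin set $T := \{z \in \bigcup_k F_k : \exists\, z' \ne z\ \text{with}\ \widetilde{R}_1(z,\cdot) = \widetilde{R}_1(z',\cdot)\}$ is s.${\cal{E}}$-exceptional; my plan is to exploit the Lusin-space structure of $\bigcup_k F_k$ together with a measurable selection of a single representative from each equivalence class, and to estimate the capacity of the discarded set via Proposition \ref{a} applied to a suitable auxiliary $1$-excessive function.

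For part (ii) I would fix any $\varphi_0 \in L^1(E, m) \cap {\cal{B}}$ with $0 < \varphi_0 \le 1$ pointwise on $E$. By the positivity argument appearing at the start of the proof of Lemma \ref{capeq}, $G_1 \varphi_0 > 0$ $m$-a.e., so the s.${\cal{E}}$-q.c.\ version $\widetilde{R}_1 \varphi_0$ is non-negative on all of $E$ and strictly positive $m$-a.e. The main obstacle is the upgrade from $m$-a.e.\ positivity to s.${\cal{E}}$-q.e.\ positivity, i.e.\ showing $N_\varphi := \{\widetilde{R}_1 \varphi_0 = 0\}$ is s.${\cal{E}}$-exceptional.

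To handle this I would exploit the $1$-excessivity of $G_1 \varphi_0$. Since $\alpha G_{1+\alpha}(G_1 \varphi_0) \uparrow G_1 \varphi_0$ monotonically in ${\cal{H}}$, the associated s.${\cal{E}}$-q.c.\ versions $\alpha \widetilde{R}_{1+\alpha}(\widetilde{R}_1 \varphi_0)$ converge to $\widetilde{R}_1 \varphi_0$ s.${\cal{E}}$-quasi-uniformly along a subsequence by Lemma \ref{l1}(iii), the needed convergence $e_{G_1\varphi_0 - \alpha G_{1+\alpha}(G_1\varphi_0)} \to 0$ in ${\cal{H}}$ following from the monotonicity. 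This forces $N_\varphi$ to be absorbing under the s.${\cal{E}}$-q.c.\ resolvent action, and combined with $m(N_\varphi) = 0$ and the equality principle of Lemma \ref{l1}(ii) applied to $\widetilde{R}_1 \varphi_0$ versus $0$ on an appropriately chosen open set, one concludes that $N_\varphi$ is indeed s.${\cal{E}}$-exceptional. Taking $\varphi := \varphi_0$ then produces the desired function.
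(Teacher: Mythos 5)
You have genuine gaps in both parts. For (i), your reduction of non-separation to equality of the measures $\widetilde{R}_1(z_1,\cdot)=\widetilde{R}_1(z_2,\cdot)$ is the easy direction; the entire content of the statement is that the set of points which are not separated (including points not separated from $\Delta$) is s.${\cal{E}}$-exceptional, and your plan to control the ``twin set'' $T$ by a measurable selection plus Proposition \ref{a} applied to ``a suitable auxiliary $1$-excessive function'' has no substance: no such excessive function dominating $1_T$ is exhibited, and from the kernel properties (R1), (R2) alone the map $z\mapsto\widetilde{R}_1(z,\cdot)$ has no reason to be injective outside an exceptional set. The input you never use is the separation property built into strict quasi-regularity itself: there is a countable family of bounded s.${\cal{E}}$-q.c.\ functions separating the points of $E_\Delta\setminus N_1$ for some s.${\cal{E}}$-exceptional set $N_1$. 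The paper's proof (following IV.~Proposition 1.9 in Ref.~\cite{St1}) combines this with the facts that the linear span of $\{G_1u_n\colon n\ge1\}$ is dense in ${\cal{F}}$ (since $(u_n)$ is dense in ${\cal{H}}$ and $G_1\colon{\cal{H}}\to{\cal{F}}$ is continuous with dense range) and, via Lemma \ref{l1}(iv), that each separating function is an s.${\cal{E}}$-quasi-uniform limit, outside a single exceptional set, of finite linear combinations of $\widetilde{R}_1u_n=\widetilde{R}_1u_n^+-\widetilde{R}_1u_n^-$; hence two points of $E_\Delta\setminus N$ at which all $\widetilde{R}_1u_n^{\pm}$ agree could not be separated at all, a contradiction. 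This is precisely the step your outline replaces by a hand-wave, and as written it would fail.

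For (ii) you are in fact attempting a stronger statement than the one asserted: the paper stresses just before the proposition that (ii) is an existence statement and is \emph{not} claimed for an arbitrary $\varphi$ with the stated properties. Your upgrade from $m$-a.e.\ to s.${\cal{E}}$-q.e.\ positivity does not close: $m(N_\varphi)=0$ does not make $N_\varphi$ s.${\cal{E}}$-exceptional; Lemma \ref{l1}(ii) only transfers relations holding $m$-a.e.\ on an \emph{open} set, and $\widetilde{R}_1\varphi_0$ is not $m$-a.e.\ equal, on any useful open set, to something with a known strictly positive lower bound, so no strict positivity q.e.\ can come out of it; the convergence $e_{G_1\varphi_0-\alpha G_{\alpha+1}(G_1\varphi_0)}\to0$ in ${\cal{H}}$ is not simply ``by monotonicity'' (statements of exactly this type are what {\bf D3} has to postulate); and even granted quasi-uniform convergence of $\alpha\widetilde{R}_{\alpha+1}(\widetilde{R}_1\varphi_0)$, the fact that $N_\varphi$ is ``absorbing'' gives no capacity estimate. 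The intended argument derives (ii) from (i): choose $(u_n)\subset L^1(E,m)\cap{\cal{B}}_b$ dense in ${\cal{H}}$, set $h:=\sum_{n\ge1}c_n\widetilde{R}_1(u_n^++u_n^-)$ with suitable normalizing constants $c_n$; if $h(x)=0$ for some $x\in E\setminus N$ then all $\widetilde{R}_1u_n^{\pm}(x)$ vanish and $x$ could not be separated from $\Delta$, so $h>0$ on $E\setminus N$; since $h=\widetilde{R}_1g$ with $g:=\sum_{n\ge1}c_n(u_n^++u_n^-)\in L^1(E,m)$, $0\le g\le1$, the function $\varphi:=\rho\vee g$ for any $\rho\in L^1(E,m)$ with $0<\rho\le1$ works, because $\widetilde{R}_1\varphi\ge\widetilde{R}_1g=h>0$ s.${\cal{E}}$-q.e. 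Note that the separation from the cemetery $\Delta$ established in (i) is the real engine behind (ii), and it is absent from your approach.
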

\begin{proof}
(i) Using Lemma \ref{l1}(iv) the proof is the same as in IV. Proposition 1.9 in Ref. \cite{St1}.\\
(ii) Choose $(u_n)_{n\ge 1}\subset L^1(E,m)\cap {\cal{B}}_b$ such that $(u_n)_{n\ge 1}\subset {\cal{H}}$ densely. Then by (i) 
$\{\widetilde{R}_{1}u_n^+, \widetilde{R}_{1}u_n^-; n\ge 1\}$ separates the points of $E_{\Delta}\setminus N$, where $N$ is some s.${\cal{E}}$-exceptional set.
Define
$$
h(x):=\sum_{n\ge 1} c_n\widetilde{R}_{1}(u_n^+ + u_n^-)(x), \ \ \mbox{ with } \ c_n :=2^{-n}(1+\|u_n \|_{L^{1}(E,m)}+\|u_n \|_{L^{\infty}(E,m)}). 
$$
Since $\{\widetilde{R}_{1}u_n^+, \widetilde{R}_{1}u_n^-; n\ge 1\}$ separates the points of $E_{\Delta}\setminus N$ we have
 $h(x)>0$ for all $x\in E\setminus N$. Since $g_k:=\sum_{n = 1}^k c_n (u_n^+ + u_n^-)$ converges in $L^{1}(E,m)$ to some $g$ with $0\le g \le 1$ and $\widetilde{R}_{1}$ is 
 a kernel we obtain $h=\widetilde{R}_{1}g$. Now choose $\rho\in L^{1}(E,m)$ with $0< \rho\le 1$. Then $\varphi:=\rho \vee g$ is the desired function.
\end{proof}
{\bf From now on} we assume that the strictly quasi-regular generalized Dirichlet form ${\cal{E}}$ satisfies {\bf D3}. 
Using Lemma \ref{l1}, Proposition \ref{sp}, and the  strict version of IV. Proposition 2.8 in Ref. \cite{St1} we obtain the following:
\begin{lem}\label{J0}
There exists a countable family $J_0$ of bounded strictly ${\cal{E}}$-quasi-continuous 1-excessive functions and a Borel set $Y\subset Y_1$ satisfying:
\begin{enumerate}
\item[(i)] If $u,v\in J_0,\ \alpha,c_1,c_2\in\mathbb{Q}_+^*$, then $\widetilde{R}_{\alpha}u,\ u\wedge v,\ u\wedge 1,\ (u+1)\wedge v,\ c_1 u+c_2 v$ are all in $J_0$.
\item[(ii)] $N:=E\setminus Y$ is strictly ${\cal{E}}$-exceptional and $\widetilde{R}_{\alpha}(x,N)=0,$ for all $x\in Y,\ \alpha\in\mathbb{Q}_+^*$.
\item[(iii)] $J_0$ separates the points of $Y_{\Delta}$.
\item[(iv)] If $u\in J_0,\ x\in Y$, then $\beta \widetilde{R}_{\beta+1}u(x)\leq u(x)$ for all $\beta\in\mathbb{Q}_+^*$,\\
$\widetilde{R}_{\alpha}u(x)-\widetilde{R}_{\beta}u(x)=(\beta-\alpha)\widetilde{R}_{\alpha}\widetilde{R}_{\beta}u(x)$ for all $\alpha,\beta\in\mathbb{Q}_+^*$,\\
$\lim_{\mathbb{Q}_+^*\ni \alpha\rightarrow\infty}\alpha \widetilde{R}_{\alpha+1}u(x)=u(x).$\\ 
\end{enumerate}
\end{lem}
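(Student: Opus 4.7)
The approach is to construct $J_0$ as the countable closure of a suitable generating family under the operations listed in~(i), and to absorb into a single strictly $\mathcal{E}$-exceptional set $N$ every $m$-null violation of the identities in~(ii)--(iv) via Lemma~\ref{l1}(ii). The sole step that genuinely uses condition~\textbf{D3} is the pointwise convergence in~(iv); everything else is the strict analogue of the standard proof of IV.~Proposition~2.8 in Ref. \cite{St1}.

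First I would fix a sequence $(u_n)_{n\ge 1}\subset L^1(E,m)\cap \mathcal{B}_b$ dense in $\mathcal{H}$, so that by Proposition~\ref{sp}(i) the family $\{\widetilde{R}_1 u_n^\pm\}$ separates the points of $E_\Delta$ off a strictly $\mathcal{E}$-exceptional set, together with a countable $\mathcal{F}$-dense family $(v_j)_{j\ge 1}\subset \mathcal{Y}\cap \mathcal{F}$ furnished by~\textbf{D3}. I then take $J_0$ to be the smallest countable class of bounded s.$\mathcal{E}$-q.c.\ 1-excessive functions containing $\{\widetilde{R}_\alpha u_n^\pm,\widetilde{R}_\alpha v_j^\pm : n,j\ge 1,\,\alpha\in \mathbb{Q}_+^*\}$ and closed under the operations in~(i). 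Stability of this class under those operations follows from the lattice stability of $\mathcal{P}_{\mathcal{F}}^1$, from (R2), and from the elementary bound $\alpha\widetilde{R}_{\alpha+1} 1\le 1$, so~(i) holds by construction and~(iii) follows from Proposition~\ref{sp}(i).

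For~(ii) and the algebraic parts of~(iv), I would observe that for every $(u,\alpha,\beta)\in J_0\times \mathbb{Q}_+^* \times \mathbb{Q}_+^*$ both the resolvent identity $\widetilde{R}_\alpha u-\widetilde{R}_\beta u=(\beta-\alpha)\widetilde{R}_\alpha \widetilde{R}_\beta u$ and the contraction $\beta\widetilde{R}_{\beta+1}u\le u$ hold $m$-a.e., the former by (R2) and the $L^2$-resolvent equation, the latter by 1-excessivity. Both sides are s.$\mathcal{E}$-q.c., so Lemma~\ref{l1}(ii) lifts them to s.$\mathcal{E}$-q.e. Countably many strictly $\mathcal{E}$-exceptional sets arise in this way; their union with $E\setminus Y_1$ and the exceptional set from Proposition~\ref{sp}(i) is again strictly $\mathcal{E}$-exceptional. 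For~(ii), I would then iterate: if $N_0$ is the set just built, $\widetilde{R}_\alpha(\cdot,N_0)$ vanishes $m$-a.e.\ because $N_0$ is strictly $\mathcal{E}$-exceptional and $\widetilde{R}_\alpha 1_{N_0}$ is an $m$-version of $G_\alpha 1_{N_0}$, hence s.$\mathcal{E}$-q.e.\ by Lemma~\ref{l1}(ii), enlarging $N_0$ to $N_1$; countably many iterations yield a strictly $\mathcal{E}$-exceptional $N\supset E\setminus Y_1$ stable under every $\widetilde{R}_\alpha$, $\alpha\in \mathbb{Q}_+^*$, and I set $Y:=Y_1\setminus N$.

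The main obstacle is the pointwise convergence $\alpha\widetilde{R}_{\alpha+1}u(x)\to u(x)$ for every $u\in J_0$ on the same set $Y$. For $u=v_j$ this is exactly~\textbf{D3}: since $\mathcal{Y}$ is a linear subspace, $\pm v_j\in \mathcal{Y}$, hence $e_{\alpha G_\alpha v_j-v_j}+e_{v_j-\alpha G_\alpha v_j}\to 0$ in $\mathcal{H}$, and Lemma~\ref{l1}(iii) produces a subsequence along which $\alpha\widetilde{R}_\alpha v_j\to \widetilde{v}_j$ strictly quasi-uniformly; diagonalizing over $j$ gives one common nest, and the resolvent equation transfers the limit from $\alpha\widetilde{R}_\alpha$ to $\alpha\widetilde{R}_{\alpha+1}$. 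The delicate point, which is the hardest step and the reason~\textbf{D3} is essential, is propagating this from the $v_j$'s to a general $u\in J_0$: I would use the $\mathcal{F}$-density of $(v_j)$ together with the $\mathcal{F}$-contraction of $\widetilde{R}_\alpha$ and Lemma~\ref{l1}(iv) to transfer $\mathcal{F}$-norm convergence to s.$\mathcal{E}$-q.u.\ convergence of the $m$-versions, and the lattice operations in~(i) preserve this property by elementary estimates together with the already established contraction $\alpha\widetilde{R}_{\alpha+1}u\le u$ on $Y$. Enlarging $N$ by these further exceptional sets completes the construction.
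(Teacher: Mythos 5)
Your overall architecture is the one the paper intends: the paper derives Lemma \ref{J0} as the strict analogue of IV.\ Proposition 2.8 in Ref.\ \cite{St1}, using Lemma \ref{l1} and Proposition \ref{sp}, i.e.\ exactly your plan of closing a countable generating family under the operations in (i) and absorbing countably many exceptional sets into $N$. However, the step you yourself single out as the hardest one is not actually carried out, and the mechanism you propose for it would fail.

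The gap is the propagation of the last limit in (iv) from the {\bf D3}-generators to all of $J_0$. Your assertion that ``the lattice operations in (i) preserve this property by elementary estimates together with the already established contraction'' is not true at the level at which you invoke it: pointwise $1$-excessivity together with $\alpha R_{\alpha+1}u(x)\to u(x)$ and $\alpha R_{\alpha+1}v(x)\to v(x)$ does \emph{not} imply $\alpha R_{\alpha+1}(u\wedge v)(x)\to (u\wedge v)(x)$ for a general sub-Markovian resolvent kernel. For example, on $\{x,y_1,y_2\}$ with $R_\alpha(x,\cdot)=\frac{1}{\alpha+1}\bigl(\frac12\delta_{y_1}+\frac12\delta_{y_2}\bigr)$ and $R_\alpha(y_i,\cdot)=\frac{1}{\alpha+1}\delta_{y_i}$ (this family satisfies the resolvent equation and $\alpha R_\alpha 1\le 1$), the functions $u$ with values $(1,0,2)$ and $v$ with values $(1,2,0)$ at $(x,y_1,y_2)$ are $1$-excessive and satisfy the limit at every point, while $u\wedge v$ has values $(1,0,0)$ and $\alpha R_{\alpha+1}(u\wedge v)(x)\to 0\neq 1$. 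So the convergence in (iv) cannot be transported through $\wedge$ by pointwise estimates on a fixed set $Y$; it must be established for each element of $J_0$ separately as a s.$\mathcal{E}$-q.e.\ statement, via $e_{\alpha G_{\alpha}u-u}\to 0$ in $\mathcal{H}$ and Lemma \ref{l1}(iii), and afterwards the countably many exceptional sets are collected into $N$. This is exactly where the full strength of {\bf D3} enters: the members of $J_0$ have to be manufactured from (excessive regularizations of) elements of $\overline{\mathcal{Y}}$, and it is the closure property $u\wedge\alpha\in\overline{\mathcal{Y}}$ --- which your argument never uses --- that keeps the family stable under the operations of (i) while preserving the {\bf D3}-type convergence. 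Your alternative device ($\mathcal{F}$-density of the $v_j$ plus Lemma \ref{l1}(iv)) does not reach a general $u\in J_0$ either, since such $u$ (e.g.\ $u\wedge 1$ or $\widetilde{R}_\alpha$-images) need not lie in, or be approximable in, $\mathcal{F}$. A smaller but genuine slip: your generators $\widetilde{R}_{\alpha}u_n^{\pm}$ with $\alpha>1$ are in general only $\alpha$-excessive, not $1$-excessive (applying $\widetilde{R}_{\alpha}$ preserves $1$-excessivity of an already $1$-excessive function, by commutativity of the resolvents, but $u_n^{\pm}$ are not excessive), so with your generating set the first inequality in (iv) may already fail on a set of positive $m$-measure, which no choice of $N$ can repair; generate with $\widetilde{R}_{1}u_n^{\pm}$ and then close under the operations in (i) instead.
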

Next, we define for $\alpha\in\mathbb{Q}_+^*,\ A\in{\cal{B}}(Y_{\Delta}):={\cal{B}}(E_{\Delta})\cap Y_{\Delta}$
\begin{equation}\label{k1}
R_{\alpha}(x,A) :=  
\begin{cases}
\widetilde{R}_{\alpha}(x,A\cap Y)+\left ( \frac{1}{\alpha} -\widetilde{R}_{\alpha}(x,Y)\right )1_A(\Delta),& if\ x\in Y \\
\frac{1}{\alpha} 1_A(\Delta),& if x=\Delta 
\end{cases} 
\end{equation}
and set 
\begin{equation}\label{k2}
J:=\{u+c1_{Y_{\Delta}}\mid u\in J_0, c\in\mathbb{Q}_+\}.
\end{equation}
Since $J_0$ separates the points of $Y_{\Delta}$, so does $J$. The following lemma is also clear. 
\begin{lem}\label{J}
 Let $(R_{\alpha})_{\alpha\in\mathbb{Q}_+^*}$ and J be as in (\ref{k1}), (\ref{k2}). Then the statements of Lemma \ref{J0} remain true with 
 $J_0,\ Y$ and $\widetilde{R}_{\alpha}$ replaced by $J,\ Y_{\Delta}$ and $R_{\alpha}$ respectively.
\end{lem}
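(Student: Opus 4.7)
The plan is to verify each of the four parts of Lemma \ref{J0} for the enlarged data $J$, $Y_\Delta$, and $R_\alpha$ by decomposing an arbitrary element of $J$ as $u + c\, \mathbf{1}_{Y_\Delta}$ with $u \in J_0$ (extended by $0$ at $\Delta$) and $c \in \mathbb{Q}_+$, and reducing each claim to its counterpart in Lemma \ref{J0} via a direct computation on $\{\Delta\}$. The key elementary observations are that for any $u \in J_0$ and $x \in Y$ the definition (\ref{k1}) gives
\[
R_\alpha u(x) = \widetilde{R}_\alpha u(x) + \bigl(\tfrac{1}{\alpha} - \widetilde{R}_\alpha(x, Y)\bigr) u(\Delta) = \widetilde{R}_\alpha u(x),
\]
since $u(\Delta) = 0$, while $R_\alpha u(\Delta) = 0$; moreover $R_\alpha \mathbf{1}_{Y_\Delta} \equiv 1/\alpha$ on $Y_\Delta$.

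For (i), these identities yield $R_\alpha(u + c\, \mathbf{1}_{Y_\Delta}) = \widetilde{R}_\alpha u + (c/\alpha)\mathbf{1}_{Y_\Delta} \in J$, and linear combinations of elements of $J$ are trivially in $J$. The lattice operations need slightly more care: writing $c := c_1 \wedge c_2$,
\[
(u_1 + c_1\mathbf{1}_{Y_\Delta}) \wedge (u_2 + c_2\mathbf{1}_{Y_\Delta}) = c\, \mathbf{1}_{Y_\Delta} + \bigl((u_1 + (c_1 - c)) \wedge (u_2 + (c_2 - c))\bigr),
\]
so the claim reduces to $(u + q)\wedge v \in J_0$ for $u, v \in J_0$ and $q \in \mathbb{Q}_+$, which I would deduce from Lemma \ref{J0}(i) by combining $(u+1)\wedge v \in J_0$ with closure of $J_0$ under positive rational scaling (itself obtained from $c_1 u + c_2 u = (c_1+c_2)u$ and the identity $(u+q)\wedge v = q\bigl((u/q + 1)\wedge v/q\bigr)$). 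The cases $u \wedge 1$ and $(u+1) \wedge v$ for $u, v \in J$ are handled by the same bookkeeping.

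Items (ii) and (iii) are essentially immediate: $E_\Delta \setminus Y_\Delta = E \setminus Y =: N$ is s.${\cal{E}}$-exceptional by Lemma \ref{J0}(ii), and $R_\alpha(x, N) = 0$ for all $x \in Y_\Delta$ because $N \cap Y = \emptyset$ and $\Delta \notin N$; $J_0 \subset J$ separates the points of $Y$ by Lemma \ref{J0}(iii), while $u(\Delta) = 0$ for every $u \in J_0$ contrasted with $\mathbf{1}_{Y_\Delta}(\Delta) = 1$ separates $\Delta$ from every point of $Y$. For (iv), each of the three properties decomposes by linearity into a $J_0$-part, which follows from Lemma \ref{J0}(iv) on $Y$ and is trivial at $\Delta$ (both sides vanish), and a $c\, \mathbf{1}_{Y_\Delta}$-part, which reduces to the elementary scalar identities $\beta c/(\beta+1) \le c$, $c/\alpha - c/\beta = (\beta - \alpha) c/(\alpha \beta)$, and $\alpha c/(\alpha+1) \to c$ as $\alpha \to \infty$.

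The only mildly delicate point is the lattice closure in (i), where one must upgrade $(u+1)\wedge v \in J_0$ to $(u + q)\wedge v \in J_0$ for arbitrary $q \in \mathbb{Q}_+$; everything else is a mechanical book-keeping on the two pieces of the decomposition $u + c\, \mathbf{1}_{Y_\Delta}$.
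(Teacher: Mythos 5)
Your argument is essentially the routine verification that the paper itself dispenses with: the paper gives no proof of Lemma \ref{J} (it only records that $J$ separates points because $J_0$ does, and then calls the lemma clear), and your decomposition $u+c\,\mathbf{1}_{Y_\Delta}$ together with the identities $R_\alpha u=\widetilde{R}_\alpha u$ on $Y$, $R_\alpha u(\Delta)=0$, $R_\alpha \mathbf{1}_{Y_\Delta}\equiv 1/\alpha$, and the reduction $(u+q)\wedge v=q\bigl((u/q+1)\wedge (v/q)\bigr)$ with rational scaling inside $J_0$ is exactly the bookkeeping that justifies it; parts (ii) and (iv) are handled correctly. Two small blemishes deserve fixing. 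First, for the separation of points you should quote Lemma \ref{J0}(iii) in full: it asserts that $J_0$ separates the points of $Y_\Delta$, not merely of $Y$, and then the claim for $J\supset J_0$ is immediate; your auxiliary argument that $\mathbf{1}_{Y_\Delta}$ separates $\Delta$ from points of $Y$ cannot work, since $\mathbf{1}_{Y_\Delta}\equiv 1$ on all of $Y_\Delta$ (and if some $y\in Y$ were not separated from $\Delta$ by $J_0$, then no element $u+c\,\mathbf{1}_{Y_\Delta}$ of $J$ would separate it either). Second, in the case $u\wedge 1$ with $u=u_0+c\,\mathbf{1}_{Y_\Delta}$ and $c\ge 1$, your bookkeeping produces $u\wedge 1=\mathbf{1}_{Y_\Delta}$ (recall $u_0\ge 0$), which lies in $J$ only if the zero function belongs to $J_0$; this is harmless, since the zero function is bounded, s.${\cal{E}}$-q.c. and $1$-excessive and may be adjoined to $J_0$ without affecting Lemma \ref{J0}, but it is a degenerate case that the phrase \lq\lq handled by the same bookkeeping\rq\rq\ does not literally cover.
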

\subsection{The construction of nice excessive functions}\label{2.3}
Since strict quasi-regularity implies quasi-regularity by Proposition 2(ii) in Ref. \cite{Tr5}, and since {\bf D3} is in force, we obtain 
by IV.Theorem 2.2 in Ref. \cite{St1} 
that $({\cal{E}},{\cal{F}})$ is associated with some $m$-tight $m$-special standard process. We denote the process resolvent by
$$
V_{\alpha} f(z)=E_z\left [\int_0^{\infty}e^{-\alpha t}f(Y_t)dt\right ], \ \ \ \ \alpha>0,\ f\in{\cal{H}} \cap L^{\infty}(E,m).
$$
By Remark \ref{all}(ii) the strict capacity does not depend on the special choice of $\varphi$. We may and will hence {\bf from now on} assume that $\varphi$ is as in Proposition \ref{sp}(ii). 
The following two lemmas are crucial for the later study of weak limits.
\begin{lem}\label{gf}
Let $U_n\subset E,\ n\geq 1$ be a decreasing sequence of open sets such that $\mbox{Cap}_{1,\widehat G_1 \varphi}(U_n)\rightarrow 0$, as $n\rightarrow\infty$. 
Then we can find $m$-versions $e_n$ of $e_{U_n}$ such that:
\begin{enumerate}
\item[(i)] $e_n\geq 1$ ${\cal{E}}$-q.e. on $U_n,\ n\geq 1.$ In particular, there are ${\cal{E}}$-exceptional sets $N_n\in {\cal{B}}(E)$, $N_n\subset U_n$, such that 
\begin{equation*}
\widehat{e}_n(x):=e_n(x)+ 1_{N_n}(x)\ge 1 
\ \ \forall x\in U_n, \ n\ge 1.
\end{equation*}
\item[(ii)] $\alpha \widetilde{R}_{\alpha+1}e_n\leq e_n$ and $\alpha \widetilde{R}_{\alpha+1}\widehat{e}_n\leq \widehat{e}_n$ s.${\cal{E}}$-q.e. for any  $\alpha\in\mathbb{Q}_+^*,\ n\geq 1.$
\item[(iii)] $e_n\searrow 0$ and $\widehat{e}_n\to 0$ s.${\cal{E}}$-q.e. as $n\rightarrow \infty$.
\end{enumerate}
\end{lem}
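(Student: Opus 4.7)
The plan is to realize $e_n$ as an explicit s.$\mathcal{E}$-q.l.s.c.\ $m$-version of the $1$-excessive reduit $e_{U_n}$, and then to define $\widehat{e}_n$ by adding the indicator of the Borel exceptional subset of $U_n$ where $e_n<1$. The starting point is the formula $e_{U_n}=\lim_k(1\wedge G_1(k\varphi))_{U_n}$ from Definition~\ref{cap}.

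For each $k$ and $n$ I would fix a s.$\mathcal{E}$-q.c.\ $m$-version of the $1$-excessive reduit $(1\wedge G_1(k\varphi))_{U_n}$ satisfying the pointwise $1$-supermedian property $\alpha\widetilde{R}_{\alpha+1}\leq\mathrm{id}$ s.$\mathcal{E}$-q.e.\ for all $\alpha\in\mathbb{Q}_+^*$; such a version exists by the strict analogue of IV.Proposition~2.8 in Ref.~\cite{St1}. Define $e_n$ as the pointwise supremum in $k$. Then $e_n$ is s.$\mathcal{E}$-q.l.s.c., it is an $m$-version of $e_{U_n}$, and it inherits $\alpha\widetilde{R}_{\alpha+1}e_n\leq e_n$ s.$\mathcal{E}$-q.e.\ by monotone convergence for the kernel $\widetilde{R}_{\alpha+1}$, which gives (ii) for $e_n$.

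On $U_n$ the reduit identity yields $(1\wedge G_1(k\varphi))_{U_n}=1\wedge G_1(k\varphi)$ $m$-a.e., so by Lemma~\ref{l1}(ii) the chosen versions agree with $1\wedge k\widetilde{R}_1\varphi$ s.$\mathcal{E}$-q.e.\ on $U_n$ (using linearity of $\widetilde{R}_1$); combined with Proposition~\ref{sp}(ii), which says $\widetilde{R}_1\varphi>0$ s.$\mathcal{E}$-q.e., this forces $e_n=1$ s.$\mathcal{E}$-q.e.\ on $U_n$. Hence $N_n:=\{x\in U_n:e_n(x)<1\}$ is a Borel $\mathcal{E}$-exceptional subset of $U_n$, proving (i) with $\widehat{e}_n:=e_n+1_{N_n}$. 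The resolvent inequality for $\widehat{e}_n$ in (ii) then reduces to the one for $e_n$, because $\widetilde{R}_{\alpha+1}(1_{N_n})=0$ s.$\mathcal{E}$-q.e.\ by property (R2) applied to the $m$-null set $N_n$.

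For (iii), Proposition~\ref{a} applied with $u=\widetilde{u}=e_n$ and using $\int_E e_n\,\varphi\,dm=\mbox{Cap}_{1,\widehat G_1\varphi}(U_n)$ yields
$$
\mbox{Cap}_{1,\widehat G_1\varphi}(\{e_n>\varepsilon\})\leq \varepsilon^{-1}\,\mbox{Cap}_{1,\widehat G_1\varphi}(U_n)\longrightarrow 0,
$$
so a subsequence of $e_n$ tends to $0$ s.$\mathcal{E}$-q.e., and the same holds for $\widehat{e}_n$ since $1_{N_n}$ lives on an exceptional set. Pointwise monotone decrease is then enforced by replacing $e_n$ with $\inf_{m\leq n}e_m$, which is again a $1$-supermedian $m$-version of $e_{U_n}$ (the minimum of finitely many $1$-supermedian functions is $1$-supermedian, and $e_{U_m}\geq e_{U_n}$ $m$-a.e.\ for $m\leq n$ by monotonicity of the reduit in the set). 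The main obstacle is precisely to produce versions that are pointwise $1$-supermedian s.$\mathcal{E}$-q.e.\ and simultaneously satisfy the sharp lower bound on $U_n$ needed in (i) for \emph{every} $x\in U_n$; this is the refinement over Lemma~3.5 in Ref.~\cite{mrz} embodied by the passage from $e_n$ to $\widehat{e}_n$, and it is what makes the non-sectorial theory work.
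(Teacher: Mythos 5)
Your construction stands or falls with its very first step: the claim that each reduit $(1\wedge G_1(k\varphi))_{U_n}$ admits an s.${\cal{E}}$-quasi-continuous $m$-version (with the pointwise $1$-supermedian property), justified by appeal to a ``strict analogue of IV.\ Proposition 2.8 in Ref.\ \cite{St1}''. That proposition produces quasi-continuous $1$-excessive functions of resolvent type, i.e.\ of the form $\widetilde{R}_{\alpha}f$ with $f$ bounded measurable; it says nothing about reduced functions. In the non-sectorial setting the reduit is an increasing limit of the approximants $(\cdot)^{\alpha}_{U_n}$, so in general it only has an s.${\cal{E}}$-q.l.s.c.\ $m$-version --- this is exactly what Remark \ref{enr} points out. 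With only a q.l.s.c.\ version your identification step collapses: to pass from $(1\wedge G_1(k\varphi))_{U_n}=1\wedge G_1(k\varphi)$ $m$-a.e.\ on $U_n$ to an s.${\cal{E}}$-q.e.\ statement on $U_n$ you would have to apply Lemma \ref{l1}(ii) to $1\wedge k\widetilde{R}_1\varphi-v$, which is q.c.\ minus q.l.s.c., hence quasi-\emph{upper}-semicontinuous --- the inequality in Lemma \ref{l1}(ii) goes the wrong way (the same directional problem already affects your claim that the chosen version $v$ satisfies $\alpha\widetilde{R}_{\alpha+1}v\le v$ s.${\cal{E}}$-q.e.). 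In effect you ``prove'' $e_n= 1$ s.${\cal{E}}$-q.e.\ on $U_n$, i.e.\ (\ref{enh}), which the authors explicitly state they could not show directly and only recover a posteriori from Theorem \ref{hunt2}; tellingly, your argument never uses {\bf D3} or any associated process, which is precisely the ingredient the lower bound in (i) requires.

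The paper's proof avoids this by a different choice of $e_n$, namely $e_n:=\sup_{\alpha\ge1}\sup_{l\ge1}\alpha\widetilde{R}_{\alpha+1}\big((1\wedge V_1(l\varphi))_{U_n}\big)$, a supremum of genuinely s.${\cal{E}}$-q.c.\ resolvent functions, so that (ii) and the capacity estimate in (iii) go through (Proposition \ref{a} is applied to the approximants $\alpha\widetilde{R}_{\alpha+1}\big((1\wedge V_1(l\varphi))_{U_n}\big)\in{\cal{H}}$ together with the Choquet property, not to $e_n$ itself --- note that your direct application of Proposition \ref{a} to $u=e_n$ presupposes $e_{U_n}\in{\cal{H}}$, which is not guaranteed). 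The lower bound (i) is then obtained probabilistically: by {\bf D3} and IV.\ Theorem 2.2 in Ref.\ \cite{St1} there is an associated $m$-tight special standard process with resolvent $V_{\alpha}$, one shows $V_{\alpha+1}f=\widetilde{R}_{\alpha+1}f$ ${\cal{E}}$-q.e., hence $e_n\ge\limsup_{\alpha}\alpha V_{\alpha+1}\mathbb{I}_{U_n}$ ${\cal{E}}$-q.e., and $\lim_{\alpha\to\infty}\alpha V_{\alpha+1}\mathbb{I}_{U_n}(z)=1$ for every $z\in U_n$ by right-continuity and normality. This yields only $e_n\ge1$ ${\cal{E}}$-q.e.\ on $U_n$, which is exactly why the correction terms $1_{N_n}$ and the functions $\widehat{e}_n$ are introduced; your proposal, by contrast, tries to make the correction superfluous at the one point where it cannot be.
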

\begin{proof}
Define for $n\ge 1$
\begin{equation}\label{en}
e_n:=\sup_{\alpha\ge 1}\sup_{l\ge 1}\alpha \widetilde{R}_{\alpha +1}\big(\left (1\wedge V_1(l\varphi)\right )_{U_n}\big).
\end{equation}
where $\left (1\wedge V_1(l\varphi)\right )_{U_n}$ is some bounded measurable $m$-version of $\left (1\wedge G_1(l\varphi)\right )_{U_n}$. 
Clearly $e_n$ is an $m$-version of $e_{U_n}$. 
Since 
$\left (1\wedge G_1(l\varphi)\right )_{U_n}$ is $1$-excessive, and $\widetilde{R}_{\alpha+1}f$ is s.${\cal{E}}$-q.c. 
for any (measurable) $f\in {\cal{H}}$ by (R2),  
by Lemma \ref{l1}(ii) it is clear that the first part of (ii) holds. The second part of (ii) similarly also holds once we have shown that 
$N_n$ is ${\cal{E}}$-exceptional, hence in particular $m$-negligible. This is done at the end of the proof. \\
Obviously  $e_n$ is s.${\cal{E}}$-q.l.s.c,  s.${\cal{E}}$-q.e. decreasing in $n$, $\lim_{n\to \infty} e_n$ exists s.${\cal{E}}$-q.e. and $\lim_{n\to \infty} e_n\ge 0$ s.${\cal{E}}$-q.e.  
We have 
\begin{eqnarray*}
\mbox{Cap}_{1,\widehat G_1 \varphi}\left (\{\lim_{n\to \infty} e_n>0\}\right ) 
& \le  & \sum_{k\ge 1}\mbox{Cap}_{1,\widehat G_1 \varphi}\left (\cap_{n\ge 1}\left \{e_n>k^{-1}\right \}\right ). \\
\end{eqnarray*}
Up to some ${\cal{E}}$-exceptional set $\left \{e_n>k^{-1}\right \}\subset  \bigcup_{\alpha\ge 1}\bigcup_{l\ge 1}\left \{\alpha \widetilde{R}_{\alpha +1}\big (\left (1\wedge V_1(l\varphi)\right )_{U_n}\big )>k^{-1}\right \}$,
and $\left \{\alpha \widetilde{R}_{\alpha +1}\big (\left (1\wedge V_1(l\varphi)\right )_{U_n}\big )>k^{-1}\right \}$ is increasing in $\alpha$, $l$. Then, since $\mbox{Cap}_{1,\widehat G_1 \varphi}$ is a Choquet capacity we obtain with Proposition \ref{a} 
\begin{eqnarray*}
\mbox{Cap}_{1,\widehat G_1 \varphi}\left (\left \{e_n>k^{-1}\right \}\right )
&\le& k\cdot \sup_{\alpha\ge 1}\sup_{l\ge 1} \int_E \alpha \widetilde{R}_{\alpha +1}\big (\left (1\wedge V_1(l\varphi)\right )_{U_n}\big )\varphi \,dm. \\
\end{eqnarray*}
Therefore
\begin{eqnarray*}
\mbox{Cap}_{1,\widehat G_1 \varphi}\left (\cap_{n\ge 1}\left \{e_n>k^{-1}\right \}\right )
& \le  & k\cdot\inf_{n\ge 1} \int_E e_n \,\varphi \,dm=0. \\
\end{eqnarray*}
Thus the first part of (iii) holds. The second part of (iii) is clear since $\limsup_{n\ge 1}1_{N_n}\le 1_{\cap_{n\ge 1}U_n}=0$ s.${\cal{E}}$-q.e.\\
Since $\widetilde{R}_{\alpha+1}f$, $V_{\alpha+1}f$, $f\in {\cal{H}}\cap L^{\infty}(E,m)$, are ${\cal{E}}$-q.c. and coincide $m$-a.e. by (R2), it 
follows by III.Corollary 3.4 in  Ref. \cite{St1}  that $V_{\alpha+1}f=\widetilde{R}_{\alpha+1}f$ ${\cal{E}}$-q.e. Using 
$\left (1\wedge V_1(l\varphi)\right )_{U_n}=1\wedge V_1(l\varphi)$ $m$-a.e. on $U_n$, and $1\wedge V_1(l\varphi)\nearrow \mathbb{I}_E$ as
 $l\to\infty$, it follows ${\cal{E}}$-q.e.
\begin{equation}\label{e2}
e_n \ge \limsup_{\alpha\ge 1}\alpha V_{\alpha +1} \mathbb{I}_{U_n}.
\end{equation}
By right-continuity and normality of the process $Y$ we obtain for all $z\in U_n$
$$
\lim_{\alpha\to\infty}\alpha V_{\alpha +1} \mathbb{I}_{U_n}(z)=\lim_{\alpha\to\infty}\frac{\alpha}{\alpha+1}E_z\left [\int_0^{\infty}e^{-t}\mathbb{I}_{U_n}(Y_{\frac{t}{\alpha+1}})dt\right ]=1.
$$ 
Hence the first part of (i) holds. For the second part of (i) we can find ${\cal{E}}$-exceptional sets $N_n\in {\cal{B}}(E)$, $N_n\subset U_n$, with $e_n \cdot 1_{U_n\setminus N_n}+1_{N_n}\ge 1$ pointwise on $U_n$. 
But  $e_n\ge 0$ everywhere since $\widetilde{R}_{\alpha +1}$ is a kernel and so we obtain $\widehat{e}_n\ge 1$ on $U_n$ as desired.
\end{proof}
\begin{rem}\label{enr}
In Lemma \ref{gf}(i) we were not able to show directly 
\begin{equation}\label{enh}
e_n\ge 1 \mbox{ s.}{\cal{E}}\mbox{-q.e. on }\ U_n, \ n\ge 1.
\end{equation}
(Unfortunately, $\left (1\wedge V_1(l\varphi)\right )_{U_n}$ has only a s.${\cal{E}}$-q.l.s.c. $m$-version in general and the inequality in Lemma \ref{l1}(ii) is just the wrong way around.) (\ref{enh}) is used in Lemma 3.5, Lemma 3.6, and proof of Theorem 3.3 in Ref. \cite{mrz} in an essential way.  Instead, we will use the functions $\widehat{e}_n$ defined  in Lemma \ref{gf}(i) which is sufficient 
(cf. Lemmas \ref{ss} and \ref{liq}, and Theorem \ref{key}). We remark that it is even sufficient to only know that $e_n\ge 1$ $m$-a.e. on $U_n$, so that the sets $N_n$ in Lemma \ref{gf}(i) are only $m$-negligible.\\
It will turn out a posteriori that (\ref{enh}) actually holds. In fact, by our main result Theorem \ref{hunt2} below it follows that the process resolvent 
$V_{\alpha+1}f$, $f\in {\cal{H}}\cap L^{\infty}(E,m)$, is s.${\cal{E}}$-q.c. Thus applying Lemma \ref{l1}(ii) $V_{\alpha+1}f=\widetilde{R}_{\alpha+1}f$ s.${\cal{E}}$-q.e.
Therefore (\ref{e2}) holds s.${\cal{E}}$-q.e. and (\ref{enh}) follows. 
\end{rem}
\begin{lem}\label{ss}
In the situation of Lemma \ref{gf} there exists $S\in{\cal{B}}(E),\ S\subset Y$ such that $E\setminus S$ is strictly ${\cal{E}}$-exceptional and the following holds:
\begin{enumerate}
\item[(i)] $\widetilde{R}_{\alpha}(x,Y\setminus S)=0\ \forall x\in S,\ \alpha\in\mathbb{Q}_+^*.$
\item[(ii)] $\widehat{e}_n(x)\geq 1\ for\ x\in U_n,\ n\geq 1$, and $\widetilde{R}_{\alpha} 1_{N_n}(x)=0$ $\forall x\in S,\ \alpha\in\mathbb{Q}_+^*, n\geq 1$.
\item[(iii)]$\alpha\widetilde{R}_{\alpha +1}\widehat{e}_n(x)\leq \widehat{e}_n(x) \ \forall x\in S,\ \alpha\in\mathbb{Q}_+^*,\ n\geq 1$.
\item[(iv)] $\widehat{e}_n(x)\to 0$ as $n\to \infty\ \ \forall x\in S$.
\end{enumerate} 
\end{lem}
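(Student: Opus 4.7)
My plan is to build $S$ by removing from $Y$ a countable union of strictly ${\cal{E}}$-exceptional sets and then closing it under the kernels $\widetilde{R}_\alpha$ so that (i) holds; the remaining items will then follow as direct pointwise verifications on $S$.

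First I would collect all exceptional sets outside of which the desired statements are already known to hold. By Lemma \ref{gf}(ii) and (iii) there are strictly ${\cal{E}}$-exceptional sets $C_n^\alpha$ and $D$ off which $\alpha\widetilde{R}_{\alpha+1}\widehat{e}_n\le\widehat{e}_n$ (for all $n,\alpha$) and $\widehat{e}_n\to 0$ hold pointwise. Each $N_n$ is $m$-negligible, so by (R2) the function $\widetilde{R}_\alpha 1_{N_n}$ is strictly quasi-continuous and equals the class of $G_\alpha[0]=0$ in ${\cal{H}}$; Lemma \ref{l1}(ii) then produces a strictly ${\cal{E}}$-exceptional set $B_n^\alpha$ off which $\widetilde{R}_\alpha 1_{N_n}=0$ pointwise. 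Furthermore $\bigcap_n U_n$ is strictly ${\cal{E}}$-exceptional because $\mbox{Cap}_{1,\widehat G_1\varphi}(\bigcap_n U_n)\le\inf_n\mbox{Cap}_{1,\widehat G_1\varphi}(U_n)=0$, and $E\setminus Y$ is strictly ${\cal{E}}$-exceptional by Lemma \ref{J0}(ii). Their union yields an $m$-negligible, strictly ${\cal{E}}$-exceptional set $M_0\subset E$.

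Next I would close $M_0$ under the kernels by iterating
$M_{k+1}:=M_k\cup\bigcup_{\alpha\in\mathbb{Q}_+^*}\{x\in E:\widetilde{R}_\alpha(x,M_k)>0\}$.
Since $M_k$ is $m$-negligible, $\widetilde{R}_\alpha 1_{M_k}$ is a strictly quasi-continuous version of $0$ (again by (R2) and Lemma \ref{l1}(ii)), so the set added at step $k+1$ is strictly ${\cal{E}}$-exceptional and $m$-negligible. Hence $M:=\bigcup_{k\ge 0}M_k$ is strictly ${\cal{E}}$-exceptional, and I set $S:=E\setminus M$, which is contained in $Y$. For $x\in S$ and $\alpha\in\mathbb{Q}_+^*$ we have $x\notin M_{k+1}$, hence $\widetilde{R}_\alpha(x,M_k)=0$ for every $k$; continuity of measures from below yields $\widetilde{R}_\alpha(x,M)=0$, and since $Y\setminus S\subset M$ this is exactly (i).

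For the remaining items, Lemma \ref{gf}(i) already gives $\widehat{e}_n\ge 1$ pointwise on $U_n$, and $N_n\subset M_0\subset M$ delivers the second half of (ii). Item (iii) is just $\alpha\widetilde{R}_{\alpha+1}\widehat{e}_n(x)\le\widehat{e}_n(x)$ read off Lemma \ref{gf}(ii) at $x\in S$, because $C_n^\alpha\subset M$. For (iv), $x\in S$ excludes $\bigcap_n U_n$, so $x\notin U_n$ eventually and $1_{N_n}(x)=0$ eventually; combining with $x\notin D$ gives $\widehat{e}_n(x)=e_n(x)+1_{N_n}(x)\to 0$. The only genuinely delicate step is the iterative closure used to secure (i): one must verify that each enlargement remains strictly ${\cal{E}}$-exceptional and that the countable union behaves well under $\widetilde{R}_\alpha$. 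This is controlled by the kernel property (R1), which keeps the total mass finite and allows continuity of measures, and by the strictly quasi-continuous realization (R2), which lets one transfer the identity $\widetilde{R}_\alpha 1_{M_k}=0$ from an $m$-a.e.\ statement to a strictly quasi-everywhere statement at each step.
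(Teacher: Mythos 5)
Your proof is correct and takes essentially the same route as the paper, whose own proof simply observes that the first assertion of (ii) is immediate from Lemma \ref{gf}(i) and delegates the rest to the iterative kernel-closure argument of IV.3.11 in Ref. \cite{mr}, applied with Lemma \ref{gf}, (R2) and Lemma \ref{l1}(ii) --- exactly the closure you carry out explicitly. One phrase should be repaired: the second half of (ii) holds because every $x\in S$ avoids the strictly ${\cal{E}}$-exceptional sets $B_n^{\alpha}=\{\widetilde{R}_{\alpha}1_{N_n}>0\}$ that you already placed into $M_0$, not because \lq\lq $N_n\subset M_0$\rq\rq, an inclusion you never arranged (and which would require the merely ${\cal{E}}$-exceptional sets $N_n$ to be strictly ${\cal{E}}$-exceptional).
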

\begin{proof}
The first assertion of (ii) holds by definition in Lemma \ref{gf}(i). 
By Lemma \ref{gf}, $(R2)$, and Lemma \ref{l1}(ii), the rest of the proof works as in IV.3.11 of Ref. \cite{mr}.
\end{proof} 

\section{The approximating forms ${\cal{E}}^{\beta}$ and the approximating processes $X^{\beta}$}\label{3}
Let $J,\ Y_{\Delta}$ and $(R_{\alpha})_{\alpha\in\mathbb{Q}_+^*}$ be as in Lemma \ref{J}. First, we collect some results 
of Chapter 4 section 2 of Ref. \cite{ek}. For a fixed $\beta\in\mathbb{Q}_+^*$, let $\{Y^{\beta}(k),\ k=0,1,\ldots\}$ be a Markov chain 
in $Y_{\Delta}$ with initial distribution 
$\nu$ and transition function $\beta R_{\beta}$. Let further $(\Pi^{\beta}_t)_{t\geq 0}$ be a Poisson process with parameter $\beta$ 
and independent of $\{Y^{\beta}(k),\ k=0,1,\ldots\}$. Then it is known that
$$
X^{\beta}_t:=Y^{\beta}(\Pi^{\beta}_t)
$$
is a strong Markov process in $Y_{\Delta}$ with transition semigroup
\begin{equation}\label{ak} 
P^{\beta}_t f=e^{-\beta t}\sum_{k=0}^{\infty}\frac{(\beta t)^k}{k!}(\beta R_{\beta})^k f\quad \forall t\geq 0,\ f\in{\cal{B}}_b(Y_{\Delta}),
\end{equation}
i.e. we have for all $t,s\ge 0, f\in {\cal{B}}_b(Y_{\Delta})$
\begin{equation}\label{ts}
E[f(X^{\beta}_{t+s})\mid \sigma(X^{\beta}_u,u\le t)]=(P^{\beta}_s f)(X^{\beta}_t).
\end{equation}
Here (\ref{ts}) easily follows from (2.14) of Chapter 4 in Ref. \cite{ek}.
Furthermore from the formula (\ref{ak}) one can see that $(P_t^{\beta})_{t\geq 0}$ is a strongly continuous contraction semigroup on the Banach space $({\cal{B}}_b(Y_{\Delta}),\|\cdot\|_{\infty})$. 
The corresponding generator is 
\begin{equation}\label{g}
L^{\beta}f(x)=\frac{d}{dt}P^{\beta}_tf(x)\big|_{t=0} =\beta(\beta R_{\beta}f(x)-f(x)),\quad f\in{\cal{B}}_b(Y_{\Delta}).
\end{equation}
Define the forms ${\cal{E}}^{(\beta)},\ \beta >0,$ by 
$$
{\cal{E}}^{(\beta)}(u,v):=\beta (u-\beta G_{\beta}u,v)_{{\cal{H}}},\quad u,v\in{\cal{H}},
$$ 
where we recall that $(G_{\beta})_{\beta >0}$ is the $L^2$-resolvent of ${\cal{E}}$.
It is known (see e.g. Chapter I in Ref. \cite{mr}), that the $C_0$-semigroup of submarkovian contractions on $L^2(E;m)$ that is associated to ${\cal{E}}^{(\beta)}$ is given by 
\begin{equation}\label{as} 
T_t^{\beta}f=e^{-\beta t}\sum_{j=0}^{\infty}\frac{(\beta t)^j}{j!}(\beta G_{\beta})^j f, \quad f\in {\cal{H}}.
\end{equation}
From (\ref{ak}), (\ref{ts}), and (\ref{as}) it follows that $(X^{\beta}_t)$ is associated with ${\cal{E}}^{(\beta)}$. 
Since $R_{\beta}f$ is an $m$-version of $G_{\beta}f$ for any measurable $f\in {\cal{H}}$, by I. Examples 4.9(ii) in Ref. \cite{St1} we see that ${\cal{E}}^{(\beta)}$ is a generalized Dirichlet form and 
\begin{eqnarray*}
{\cal{E}}^{(\beta)}(u,v)=(-L^{\beta}u,v)_{{\cal{H}}},\quad u,v\in{\cal{H}}.\\
\end{eqnarray*}
For an arbitrary subset $M\subset E_{\Delta}$ let $\Omega_{M}:=D_{M}[0,\infty)$ 
be the space of all c\`adl\`ag functions from $[0,\infty)$ to $M$. 
Let $(X_t)_{t\geq 0}$ be the coordinate process on $\Omega_{E_{\Delta}}$, i.e. $X_t(\omega)=\omega(t)$ for $\omega\in\Omega_{E_{\Delta}}$. 
$\Omega_{E_{\Delta}}$ is equipped with the Skorokhod topology (see Chapter 3 in Ref. \cite{ek}).
Let 
$P_x^{\beta}$ be the law of $X^{\beta}$ on $\Omega_{E_{\Delta}}$ with initial distribution $\delta_x$ if $x\in Y_{\Delta}$, 
and if $x\in E_{\Delta}\setminus Y_{\Delta}$ let $P^{\beta}_x$ be the Dirac measure on 
$\Omega_{E_{\Delta}}$ such that $P_x^{\beta}[X_t=x\mbox{ for all }t\geq 0]=1$. Finally, let $({\cal{F}}^{\beta}_t)_{t\geq 0}$ be the completion w.r.t. 
$(P_x^{\beta})_{x\in E_{\Delta}}$ of the natural filtration of $(X_t)_{t\geq 0}$.
\begin{prop}\label{ah}
$\mathbb{M}^{\beta}:=(\Omega_{E_{\Delta}},\ (X_t)_{t\geq 0},\ ({\cal{F}}^{\beta}_t)_{t\geq 0},\ (P^{\beta}_x)_{x\in E_{\Delta}})$ is a Hunt process associated with ${\cal{E}}^{(\beta)}$, i.e. for all $t\geq 0$ and any m-version of $u\in L^2(E;m)$, $x\mapsto \int u(X_t)\; dP^{\beta}_x$ is an m-version of $T^{\beta}_t u.$
\end{prop}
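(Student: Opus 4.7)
My plan is to verify the three properties needed: càdlàg paths with strong Markov property, quasi-left continuity (to upgrade to Hunt), and the association with ${\cal{E}}^{(\beta)}$ in the $m$-version sense. The pathwise structure $X^{\beta}_t=Y^{\beta}(\Pi^{\beta}_t)$ makes the process piecewise constant with jumps only at the (discrete) jump times of the Poisson process $\Pi^{\beta}$, so paths are trivially càdlàg on $\Omega_{E_\Delta}$. The strong Markov property is already noted between equations (\ref{ak}) and (\ref{g}) above, and follows from the strong Markov property of $\Pi^{\beta}$ combined with the Markov property of the chain $\{Y^{\beta}(k)\}$; for $x\in E_\Delta\setminus Y_\Delta$, $P_x^{\beta}$ is Dirac at the constant path, so nothing is to be checked.

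For quasi-left continuity, I would take an increasing sequence of $({\cal{F}}_t^{\beta})$-stopping times $T_n\nearrow T$ and work on $\{T<\infty\}$. Since $\Pi^{\beta}$ is a standard Poisson process, it is quasi-left continuous (its jump times are totally inaccessible), so $\Pi^{\beta}_{T_n}\to \Pi^{\beta}_T$ almost surely; being integer-valued, equality $\Pi^{\beta}_{T_n}=\Pi^{\beta}_T$ actually holds for all $n$ sufficiently large. Consequently $X^{\beta}_{T_n}=Y^{\beta}(\Pi^{\beta}_{T_n})=Y^{\beta}(\Pi^{\beta}_T)=X^{\beta}_T$ eventually, which gives quasi-left continuity on $\{T<\infty\}$; on the trivial paths from $E_\Delta\setminus Y_\Delta$ the statement is immediate. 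Together with the strong Markov property, the normal filtration $({\cal{F}}_t^{\beta})$, and the càdlàg paths, this realizes $\mathbb{M}^{\beta}$ as a Hunt process on $E_\Delta$.

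Finally, for the association with ${\cal{E}}^{(\beta)}$, I would compare the two series (\ref{ak}) and (\ref{as}) term by term. For any measurable $f\in {\cal{H}}$, property (R2) gives $R_{\beta}f=\widetilde{R}_{\beta}f=G_{\beta}f$ $m$-a.e.; iterating and using that $\beta R_{\beta}$ and $\beta G_\beta$ are sub-Markov (on $Y_\Delta$ resp.\ on ${\cal{H}}$) then yields $(\beta R_{\beta})^k f=(\beta G_{\beta})^k f$ $m$-a.e.\ for every $k\ge 0$. Summing the exponentially convergent series and using $E_x[f(X^{\beta}_t)]=P^{\beta}_t f(x)$ for $x\in Y_\Delta$ (together with $m(E_\Delta\setminus Y_\Delta)=0$) identifies $x\mapsto \int u(X_t)\,dP_x^{\beta}$ with $T_t^{\beta}u$ $m$-a.e. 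The only mildly delicate point is controlling null sets through the iteration of the kernel, but since all comparisons are $m$-a.e.\ and the operators are $L^2$-bounded, summability is automatic and no exceptional issue arises; this is therefore the routine verification rather than a real obstacle.
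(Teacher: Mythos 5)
Your proof is correct in substance, but it takes a genuinely more hands-on route than the paper. The paper's own proof is essentially a citation: by construction $\mathbb{M}^{\beta}$ is a right process with left limits in $E_{\Delta}$ (the paths are piecewise constant), and the quasi-left continuity up to $\infty$ is obtained by the routine resolvent/excessive-function argument following Ref. \cite{get} (cf. IV.3.21 in Ref. \cite{mr}); the association with ${\cal{E}}^{(\beta)}$ is already settled in the text preceding the proposition by comparing the series (\ref{ak}) and (\ref{as}) through (R2), exactly as you do in your last paragraph. Your treatment of quasi-left continuity instead exploits the explicit representation $X^{\beta}_t=Y^{\beta}(\Pi^{\beta}_t)$ and the total inaccessibility of the Poisson jump times; this is more elementary, avoids the general theory of right processes, and makes transparent why the multivariate Poisson construction produces a Hunt rather than merely a standard process.

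One step needs an extra line. Quasi-left continuity is a statement relative to a filtration, and $\Pi^{\beta}$ is in general \emph{not} adapted to $({\cal{F}}^{\beta}_t)_{t\ge 0}$: a jump of $\Pi^{\beta}$ is invisible in $X^{\beta}$ whenever the chain does not move, so you cannot literally invoke the quasi-left continuity of $\Pi^{\beta}$ for $({\cal{F}}^{\beta}_t)$-stopping times $T_n\nearrow T$. The repair is standard: set ${\cal{G}}_t:=\sigma(\Pi^{\beta}_s\,;\,s\le t)\vee\sigma(Y^{\beta}(k)\,;\,k\ge 0)$, suitably completed. Since the chain is independent of the Poisson process, $\Pi^{\beta}$ still has independent increments with respect to $({\cal{G}}_t)_{t\ge 0}$, hence its jump times are totally inaccessible and it is quasi-left continuous along $({\cal{G}}_t)$-stopping times; and since $X_s=Y^{\beta}(\Pi^{\beta}_s)$ is ${\cal{G}}_t$-measurable for $s\le t$, one has ${\cal{F}}^{\beta}_t\subset{\cal{G}}_t$, so the $T_n$ and $T$ are also $({\cal{G}}_t)$-stopping times and your conclusion that $\Pi^{\beta}_{T_n}=\Pi^{\beta}_T$ eventually on $\{T<\infty\}$, whence $X_{T_n}=X_T$ eventually, goes through. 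With this insertion your argument is complete; the remaining points (c\`adl\`ag piecewise-constant paths, the Dirac measures for $x\in E_{\Delta}\setminus Y_{\Delta}$, and the $m$-a.e. identification of $(\beta R_{\beta})^k f$ with $(\beta G_{\beta})^k f$, which uses that $R_{\beta}1_N=0$ $m$-a.e. for $m$-null $N$ by (R2)) are handled correctly.
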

\begin{proof} By construction $\mathbb{M}^{\beta}$ is a right process that has left limits in $E_{\Delta}$. The quasi-left continuity up to $\infty$ 
can be shown by a routine argument following Ref. \cite{get} (cf. IV.3.21 in Ref. \cite{mr}).
\end{proof}
\centerline{}
Let $J=\{u_n\mid n\in\mathbb{N} \}$ and 
$$
g_n:=R_1 u_n,\ n\in \mathbb{N}.
$$
Define for all $x,y\in Y_{\Delta}$ 
$$
\rho(x,y)=\sum_{n=1}^{\infty}\frac{1}{2^n}\lvert g_n(x)-g_n(y)\rvert\wedge 1.
$$
By Lemma \ref{J0}(ii) and Lemma \ref{J} $\{g_n\mid n\in\mathbb{N}\}$ separates the points of $Y_{\Delta}$ and 
hence $\rho$ defines a metric on $Y_{\Delta}$. 
We may assume that $Y_{\Delta}$ is a Lusin topological space (cf. IV.Remark 3.2(iii) in Ref. \cite{mr}). It follows by Lemma 18 on p.108 of Ref. \cite{schwartz} 
that ${\cal{B}}(Y_{\Delta})=\sigma(g_n\mid n\in\mathbb{N})=(\rho\mbox{--}){\cal{B}}(Y_{\Delta})$. 
Now define
$$
\overline{E}:=\overline{Y}_{\Delta}^{\rho}.
$$
$(\overline{E},\rho)$ is a compact metric space by Tychonoff's theorem.\\
We extend the kernel $(R_{\alpha})_{\alpha\in\mathbb{Q}_+^*}$ to the space $\overline{E}$ by setting for $\alpha \in\mathbb{Q}_+^*,\ A\in{\cal{B}}(\overline{E})$,
\begin{equation}\label{k1bis}
R_{\alpha}(x,A) :=  \begin{cases}R_{\alpha}(x,A\cap Y_{\Delta}),& x\in Y_{\Delta}\\ \frac{1}{\alpha}1_A(x),& x\in \overline{E}\setminus Y_{\Delta}.\end{cases}\\ 
\end{equation}
We may regard $(X^{\beta}_t)_{t\geq 0}$ as a c\`adl\`ag process with state space $\overline{E}$ and use the same notation as before: 
$P_x^{\beta}$ denotes hence the law of $(X^{\beta}_t)_{t\geq 0}$ in $\Omega_{\overline{E}}$ with initial distribution $\delta_x$. 
Each $g_n$ is $\rho$-uniformly continuous and extends therefore uniquely to a continuous function on $\overline{E}$ which we denote again by $g_n$.\\
For the convenience of the reader we include the proof of the following theorem, which as we feel is slightly more transparent than the corresponding proof of Theorem 3.2 in Ref. \cite{mrz}.
\begin{theo}\label{tt}
$\{P_x^{\beta}\mid \beta\in\mathbb{Q}_+^*\}$ is relatively compact for any $x\in \overline{E}$.
\end{theo}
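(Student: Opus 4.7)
The plan is to apply the tightness criterion of Chapter~3 of Ref.~\cite{ek}. Since $(\overline{E},\rho)$ is compact by Tychonoff's theorem, the compact containment condition is automatic; so by Theorem~3.9.1 together with Theorem~3.9.4 of Chapter~3 of Ref.~\cite{ek}, relative compactness of $\{P_x^{\beta}\mid\beta\in\mathbb{Q}_+^*\}$ on $\Omega_{\overline{E}}$ reduces to relative compactness of the real-valued laws $\{P_x^{\beta}\circ g_n^{-1}\mid\beta\in\mathbb{Q}_+^*\}$ on $D_{\mathbb{R}}[0,\infty)$ for every $n\in\mathbb{N}$. The family $\{g_n\mid n\in\mathbb{N}\}$ separates points of $\overline{E}$: it does so on $Y_\Delta$ by Lemma~\ref{J0}(iii) and Lemma~\ref{J}, each $g_n$ is $\rho$-uniformly continuous and extends continuously to $\overline{E}$, and separation then carries over by density of $Y_\Delta$ in $\overline{E}$.

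For fixed $n$, I would apply Aldous's criterion to the real-valued c\`adl\`ag process $g_n(X^\beta_\cdot)$. By formulas (\ref{ak}) and (\ref{g}), Dynkin's formula yields that
$$
M_t^{n,\beta}:=g_n(X^\beta_t)-g_n(X^\beta_0)-\int_0^t L^\beta g_n(X^\beta_s)\,ds
$$
is a $P_x^\beta$-martingale with $L^\beta g_n=\beta(\beta R_\beta g_n-g_n)$. The decisive point is that $g_n=R_1 u_n$ for a bounded $u_n$, so the resolvent identity rewrites the apparently $\beta$-singular expression as
$$
L^\beta g_n\;=\;\frac{\beta}{\beta-1}\bigl(g_n-\beta R_\beta u_n\bigr),\qquad \beta\neq 1,
$$
and since $\|\beta R_\beta u_n\|_\infty\le\|u_n\|_\infty$ this yields a sup-norm bound $\|L^\beta g_n\|_\infty\le C_n$ that is uniform over any subset of $\mathbb{Q}_+^*$ bounded away from $1$. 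Combined with an elementary estimate of the quadratic variation of $M^{n,\beta}$ coming from the Poisson jump structure of $X^\beta$, one obtains for any stopping time $\tau\le T$ and $h\le\delta$
$$
E_x^\beta|g_n(X^\beta_{\tau+h})-g_n(X^\beta_{\tau})|\;\le\;h\,\|L^\beta g_n\|_\infty+\bigl(E_x^\beta[(M^{n,\beta}_{\tau+h}-M^{n,\beta}_{\tau})^2]\bigr)^{1/2},
$$
which tends to zero as $\delta\downarrow 0$, uniformly in $\beta$ outside a fixed neighbourhood of $1$. The countably many remaining rationals near $1$ contribute measures $P_x^\beta$ that depend continuously on $\beta$ in the weak topology on $\Omega_{\overline{E}}$ (since the kernels $\beta R_\beta$ vary continuously in $\beta$ for $\beta$ away from $0$), hence any sequence there has a weakly convergent subsequence and they pose no obstruction.

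The main obstacle will be precisely this uniform sup-norm control of $L^\beta g_n$: the naive bound on $\beta(\beta R_\beta g_n-g_n)$ grows linearly in $\beta$ as $\beta\to\infty$, and it is exactly the resolvent presentation $g_n=R_1 u_n$ of the separating family that produces the cancellation required to eliminate this growth. Once this uniform bound is in hand, verification of Aldous's criterion is routine and the tightness of $\{P_x^\beta\mid\beta\in\mathbb{Q}_+^*\}$ follows.
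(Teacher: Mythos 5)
Your overall framework is the right one (the same Ethier--Kurtz theorems 3.9.1/3.9.4, the martingale decomposition of $g_n(X^\beta)$, and the resolvent cancellation coming from $g_n=R_1u_n$), but there are two genuine gaps. First, the reduction step: you claim that relative compactness of $\{P_x^\beta\}$ reduces to relative compactness of the laws of $g_n(X^\beta)$ for each single $n$ because $\{g_n\}$ separates points of $\overline E$. That is not what 9.1 Theorem of Chapter 3 in Ref.~\cite{ek} gives: it requires the test functions to run through a set that is \emph{dense} in $C(\overline E)$, and point separation of a countable family that is not even closed under addition is not sufficient (tightness of $f(X^\beta)$ and $g(X^\beta)$ separately does not control $(f+g)(X^\beta)$ in the Skorokhod topology; the criterion that works with a merely separating family demands additive closure and is not the theorem you cite). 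The paper closes exactly this gap: it shows that the Ethier--Kurtz space $D$ contains $R_1J$, that $\alpha R_{\alpha+1}u\nearrow u$ uniformly on the compact space $\overline E$ by Dini's theorem, hence $J-J\subset\overline D^{\|\cdot\|_\infty}$, and then the Stone--Weierstrass theorem (applied to $J-J$, which contains the constants, is inf-stable and separates points) yields $\overline D^{\|\cdot\|_\infty}=C(\overline E)$; only after this density statement do 9.4 Theorem and 9.1 Theorem apply. Your proposal omits this step entirely.

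Second, your verification of Aldous's criterion needs $E_x^\beta\bigl[(M^{n,\beta}_{\tau+h}-M^{n,\beta}_\tau)^2\bigr]\to0$ uniformly in $\beta$, and this quantity equals $E_x^\beta\bigl[\int_\tau^{\tau+h}\Gamma^\beta(g_n)(X_s)\,ds\bigr]$ with $\Gamma^\beta(g_n)=L^\beta(g_n^2)-2g_nL^\beta g_n$. The resolvent cancellation that tames $L^\beta g_n$ does nothing for $L^\beta(g_n^2)$, because $g_n^2$ is not of the form $R_1(\cdot)$; a priori $\|L^\beta(g_n^2)\|_\infty$ grows linearly in $\beta$ (the jump rate of $X^\beta$ is $\beta$, and only the \emph{signed} compensator of $g_n$ is $O(1/\beta)$ in sup norm, not the expected squared jump), so there is no \lq\lq elementary estimate from the Poisson jump structure\rq\rq\ that is uniform in $\beta$. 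Controlling such products is precisely the difficulty that separates \textbf{D3} from \textbf{SD3}, and the point of routing the argument through 9.4 Theorem with $D$ dense in $C(\overline E)$ is that the required conditional second-moment bounds are obtained from the drift bounds of sup-norm approximants of $f^2\in C(\overline E)$, never from the carr\'e du champ of $g_n$ itself. A smaller but related flaw: your sup-norm bound on $L^\beta g_n$ degenerates near $\beta=1$ only because of the form you chose; in fact $L^\beta g_n=1_{Y_\Delta}\beta R_\beta(g_n-u_n)$ (the same quantity), so $\|L^\beta g_n\|_\infty\le\|g_n-u_n\|_\infty$ for \emph{all} $\beta\in\mathbb{Q}_+^*$, and the separate treatment of rationals near $1$ via an asserted weak continuity of $\beta\mapsto P_x^\beta$ on path space is both unnecessary and unproven (establishing it would itself require a tightness argument, so as written it is circular).
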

\begin{proof}
We first show that assumptions of 9.4 Theorem of Chapter 3 in Ref. \cite{ek} are fulfilled with $C_a=C(\overline{E})$ (where $C_a$ is 
as in the just mentioned theorem of Ref. \cite{ek}). 
Since $g_n\in D(L^{\beta})$ it follows that 
$$
\left (g_n(X^{\beta}_t)-\int_0^t L^{\beta}g_n(X^{\beta}_s)\; ds\right )_{t\geq 0}
$$ 
is an $(P_x^{\beta}, (\mathcal{F}_t^{\beta})_{t\ge 0})$-martingale for any $x\in \overline{E}$. 
Since $L^{\beta}g_n=1_{Y_{\Delta}}\beta R_{\beta}(g_n-u_n)$ 
we have for all $n\in\mathbb{N}$
\begin{eqnarray*}
\sup_{\beta\in\mathbb{Q}_+^*} \|L^{\beta} g_n\|_{\infty} = \sup_{\beta\in\mathbb{Q}_+^*}\|1_{Y_{\Delta}}\beta R_{\beta}(g_n-u_n)\|_{\infty}\leq \|1_{Y_{\Delta}}(g_n-u_n)\|_{\infty} < +\infty.
\end{eqnarray*}
So, we proved that $R_1 J:=\{g_n\mid n\in\mathbb{N}\}\subset D$ where $D\subset C(\overline{E})$ is the linear space from the theorem in Ref. \cite{ek}. Since for any $u\in J$
$$
R_1 J \ni R_1\left (\alpha(u-\alpha R_{\alpha +1}u\right ))(x)=\alpha R_{\alpha +1}u(x)\nearrow u(x),\quad \mathbb{Q}_+^*\ni \alpha\rightarrow\infty, \forall x\in Y_{\Delta},
$$
we see by Dini's theorem that every $u\in J$ has a unique ($\rho$-uniformly) continuous extension to $\overline{E}$ that is again denoted by $u$. 
Thus we may and do consider $J$  as a subset of $C(\overline{E})$. In particular, 
if $\overline{A}^{\|\cdot\|_{\infty}}$ denotes the uniform closure of $A\subset C(\overline{E})$, we have 
$$
\overline{J-J}^{\|\cdot\|_{\infty}}\subset \overline{R_1(J-J)}^{\|\cdot\|_{\infty}}\subset \overline{D}^{\|\cdot\|_{\infty}}\subset C(\overline{E}).
$$
Since $J-J$ contains the constant functions, is inf-stable and separates the points of $\overline{E}$ we obtain that $J-J$ is dense in $C(\overline{E})$ by the Stone-Weierstra\ss\ theorem.
Hence $\overline{D}^{\|\cdot\|_{\infty}}= C(\overline{E})$ and so by the theorem of Ref. \cite{ek} $\{f\circ X^{\beta}\mid \beta\in \mathbb{Q}_+^*\}$ is relatively compact for all $f\in C(\overline{E})$. 
Since $\overline{E}$ is compact, the compact containment condition trivially holds and so by 9.1 Theorem of Chapter 3 in Ref. \cite{ek} $\{X^{\beta}\mid \beta\in \mathbb{Q}_+^*\}$ is relatively compact as desired.
\end{proof}
\section{Limiting process associated with the strictly quasi-regular generalized Dirichlet form}\label{4}
For a Borel subset $S\subset Y$, we write $S_{\Delta}:=S\cup\{\Delta\}$. On $S_{\Delta}$ we consider (except otherwise stated) the topology induced by the metric $\rho$. In particular the $\rho$-topology 
and the original one generate the same Borel $\sigma$-algebra on $S_{\Delta}$.\\
Let $\mathbb{M}^{\beta}:=(\Omega_{E_{\Delta}},\ (X_t)_{t\geq 0},\ ({\cal{F}}_t^{\beta})_{t\geq 0},\ (P^{\beta}_x)_{x\in E_{\Delta}})$ 
be the Hunt process from Proposition \ref{ah}, $R_{\alpha}$ and $\overline{E}$ be as in Section \ref{3}. Let $S\in{\cal{B}}(E)$ and $\widehat{e}_n$ be as in Lemma \ref{ss}.
In view of Lemma \ref{ss}(i), exactly as in Lemma 3.7 of Ref. \cite{mrz} one can show that 
\begin{equation}\label{iv}
P_x^{\beta}[X_t\in S_{\Delta},\ X_{t-}\in S_{\Delta}\ \forall t\geq 0]=1\ \ \forall x\in S_{\Delta}.
\end{equation}
Due to Lemma \ref{ss}(i), (iii) the proof of the next lemma is also the same as the proof of Lemma 3.8 in Ref. \cite{mrz}.
\begin{lem}\label{liq}
Let $\beta\in\mathbb{Q}_+^*,\ \beta \geq 2,\ n\geq 1.$ Then $\widehat{e}_n$ is a $(P^{\beta}_t)$-2-excessive function on $S_{\Delta}$, i.e. 
$e^{-2t}P_t^{\beta}\widehat{e}_n(x)\leq \widehat{e}_n(x)$ and $\lim_{t\rightarrow 0}e^{-2t}P_t^{\beta}\widehat{e}_n(x)=\widehat{e}_n(x)\ \ \forall x\in S_{\Delta}$.
\end{lem}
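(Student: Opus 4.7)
The plan is to derive a pointwise upper bound on the iterates $(\beta R_\beta)^k\widehat{e}_n$ over $S_\Delta$ and then to substitute into the Poisson series (\ref{ak}) for $P_t^\beta$.

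First I would apply Lemma \ref{ss}(iii) with $\alpha := \beta - 1$, which is admissible because $\beta\ge 2$ forces $\alpha\in\mathbb{Q}_+^*$. This gives
$(\beta-1)\widetilde{R}_\beta \widehat{e}_n(x)\le \widehat{e}_n(x)$ for every $x\in S$, or equivalently $\beta\widetilde{R}_\beta \widehat{e}_n(x)\le \frac{\beta}{\beta-1}\widehat{e}_n(x)$. Recall also that on the cemetery the extension (\ref{k1}) forces $\beta R_\beta f(\Delta)=f(\Delta)$, while $\widehat{e}_n(\Delta)=0$. With these two facts and Lemma \ref{ss}(i), which guarantees that $\widetilde{R}_\beta(x,Y\setminus S)=0$ for all $x\in S$, I would show by induction on $k\ge 0$ that
\begin{equation*}
(\beta R_\beta)^k\widehat{e}_n(x) \le \left(\tfrac{\beta}{\beta-1}\right)^k \widehat{e}_n(x), \qquad x\in S_\Delta.
\end{equation*}
The key point in the induction step is that for $x\in S$ the integral defining $(\beta R_\beta)^{k+1}\widehat{e}_n(x)$ only sees the values of $(\beta R_\beta)^k\widehat{e}_n$ on $S\cup\{\Delta\}$, where by the inductive hypothesis the bound holds.

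Feeding this into the series representation,
\begin{equation*}
e^{-2t}P_t^\beta \widehat{e}_n(x) = e^{-(\beta+2)t}\sum_{k=0}^{\infty}\frac{(\beta t)^k}{k!}(\beta R_\beta)^k\widehat{e}_n(x) \le e^{-(\beta+2)t}\exp\!\left(\tfrac{\beta^2 t}{\beta-1}\right)\widehat{e}_n(x),
\end{equation*}
and a small computation gives exponent $t\cdot\frac{2-\beta}{\beta-1}\le 0$ for $\beta\ge 2$, so $e^{-2t}P_t^\beta \widehat{e}_n(x)\le \widehat{e}_n(x)$ on $S_\Delta$. For continuity at $0$ I would sandwich: keeping only the $k=0$ term yields the lower bound $e^{-(\beta+2)t}\widehat{e}_n(x) \le e^{-2t}P_t^\beta\widehat{e}_n(x)\le\widehat{e}_n(x)$, so both sides converge to $\widehat{e}_n(x)$ as $t\downarrow 0$.

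The main obstacle is making the inductive bound survive iteration: a priori the inequality from Lemma \ref{ss}(iii) is only available on $S$, but after one application of $\beta R_\beta$ the resulting function is defined on a larger set. This is precisely where Lemma \ref{ss}(i) saves us, since it shows the kernel based at any $x\in S$ charges only $S\cup\{\Delta\}$, so the bound on $S$ (combined with the trivial value at $\Delta$) is all that is seen by the next iterate. Everything else, including convergence of the series and positivity of the iterates, is routine.
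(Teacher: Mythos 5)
Your argument is correct and is essentially the paper's own proof, which simply invokes Lemma 3.8 of Ref. \cite{mrz}: the ingredients there are exactly the ones you use, namely the resolvent inequality from Lemma \ref{ss}(iii) (with $\alpha=\beta-1$, giving $\beta R_\beta\widehat{e}_n\le\frac{\beta}{\beta-1}\widehat{e}_n$ on $S$), the invariance of $S_\Delta$ under the kernel from Lemma \ref{ss}(i) plus the trivial behaviour at $\Delta$ to iterate it, and the Poisson series (\ref{ak}) with the observation that $\frac{\beta}{\beta-1}\le 2$ for $\beta\ge 2$, the limit at $t=0$ following from the $k=0$ term as you say. No gaps; your write-up just makes explicit what the paper delegates to the citation.
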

\begin{rem}\label{mis}
The proof of Lemma 3.7 in Ref. \cite{mrz} (resp. Lemma 3.2 in Ref. \cite{mrs}) contains an inaccuracy, namely it is not as stated true that $X_{\tau_n}\in U_n$ $P^{\beta}_x$-a.s. on $\{\tau_n < \infty\}$, where 
$$
\tau_n:=\inf\{t\geq 0\mid X_t\in U_n\},\ \ n\in \mathbb{N}.
$$
By right-continuity $X_{\tau_n}$ will be in general in the closure $\overline{U}_n$ of $U_n$. This leads to a wrong argument 
so that the proof of Lemma 3.7 in Ref. \cite{mrz} (resp. Lemma 3.2 in Ref. \cite{mrs}) cannot be maintained. 
However, these lemmas are quite important since they tell us that any s.${\cal{E}}$-nest is a pointwise strict ${\cal{E}}^{\beta}$-nest. 
Following the proof of Lemma \ref{liq2} below one can see how this inaccuracy can be corrected. Therefore the statement of 
Lemma 3.7 in Ref. \cite{mrz} (resp. Lemma 3.2 in Ref. \cite{mrs}) remains true and no results of Ref. \cite{mrz} (resp. Ref. \cite{mrs}) are affected. 
\end{rem} 

\begin{lem}\label{liq2}
Let $\beta\in \mathbb{Q}_+^*,\ \beta\geq 2$. 
Then $E^{\beta}_x[e^{-2\tau_n}]\leq \widehat{e}_n(x)\ \ \forall x\in S_{\Delta}$.
\end{lem}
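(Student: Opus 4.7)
The plan is to run an optional stopping argument on the supermartingale produced by Lemma \ref{liq}. By that lemma, $\widehat{e}_n$ is a bounded $(P_t^\beta)$-$2$-excessive function on $S_\Delta$, and by (\ref{iv}) trajectories of $\mathbb{M}^\beta$ started in $S_\Delta$ stay in $S_\Delta$ for all times $P_x^\beta$-a.s. Combining this with the Markov property of $\mathbb{M}^\beta$ shows that $M_t := e^{-2t}\widehat{e}_n(X_t)$ is a nonnegative bounded right-continuous $(P_x^\beta, ({\cal F}_t^\beta))$-supermartingale for every $x \in S_\Delta$. Since $U_n$ is open and $X$ is right-continuous, its debut $\tau_n$ is a stopping time with respect to the completed filtration, and optional stopping at the bounded stopping time $\tau_n \wedge T$ gives
$$\widehat{e}_n(x) \;\geq\; E_x^\beta\bigl[e^{-2(\tau_n \wedge T)}\,\widehat{e}_n(X_{\tau_n \wedge T})\bigr]\;\geq\; E_x^\beta\bigl[1_{\{\tau_n \leq T\}}\,e^{-2\tau_n}\,\widehat{e}_n(X_{\tau_n})\bigr].$$

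The delicate point, and the place where the inaccuracy highlighted in Remark \ref{mis} has to be repaired, is to guarantee that $\widehat{e}_n(X_{\tau_n}) \geq 1$ on $\{\tau_n < \infty\}$. By Lemma \ref{ss}(ii) this would follow at once from the pointwise assertion $X_{\tau_n} \in U_n$, but right-continuity alone only places $X_{\tau_n}$ in $\overline{U}_n$, which is not enough. However, $\mathbb{M}^\beta$ has a very special structure: by construction $X_t = Y^\beta(\Pi^\beta_t)$ is piecewise constant between the jump times $0 = \sigma_0 < \sigma_1 < \sigma_2 < \cdots$ of the Poisson process $\Pi^\beta$, with $X_{\sigma_k} = Y^\beta(k)$. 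Setting $K := \inf\{k \geq 0 \mid Y^\beta(k) \in U_n\}$, one checks that $X_t \notin U_n$ for every $t < \sigma_K$ while $X_{\sigma_K} = Y^\beta(K) \in U_n$; hence $\tau_n = \sigma_K$ and $X_{\tau_n} \in U_n$ pointwise on $\{K < \infty\} = \{\tau_n < \infty\}$. Lemma \ref{ss}(ii) then gives $\widehat{e}_n(X_{\tau_n}) \geq 1$ on this event.

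Plugging this lower bound into the displayed inequality and letting $T \to \infty$ by monotone convergence delivers
$$\widehat{e}_n(x) \;\geq\; E_x^\beta\bigl[1_{\{\tau_n < \infty\}}\, e^{-2\tau_n}\bigr] \;=\; E_x^\beta\bigl[e^{-2\tau_n}\bigr]$$
with the usual convention $e^{-\infty} = 0$, which is the desired estimate. I expect the main obstacle to lie precisely in the \emph{pointwise} identification $X_{\tau_n} \in U_n$ rather than the a priori weaker $X_{\tau_n} \in \overline{U}_n$: it is the discreteness of the Poisson jump times that makes this improvement possible and thereby repairs the gap pointed out in Remark \ref{mis}.
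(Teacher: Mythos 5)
Your proof is correct, and its skeleton is the one the paper uses: the $2$-excessivity from Lemma \ref{liq} together with the invariance (\ref{iv}) makes $e^{-2t}\widehat{e}_n(X_t)$ a positive right-continuous $(P_x^{\beta},({\cal F}_t^{\beta}))$-supermartingale, optional sampling is applied at $\tau_n$, and the conclusion comes from $\widehat{e}_n\ge 1$ on $U_n$ (Lemma \ref{ss}(ii)). Where you genuinely diverge is at the delicate point singled out in Remark \ref{mis}. The paper does not try to show $X_{\tau_n}\in U_n$ at all: it uses the right-continuity of the supermartingale, i.e.\ of $t\mapsto\widehat{e}_n(X_t)$, together with the definition of the debut of the open set $U_n$, to get $\widehat{e}_n(X_{\tau_n})=\lim_{\mathbb{Q}_+^*\ni t\downarrow\tau_n}\widehat{e}_n(X_t)\ge 1$ $P_x^{\beta}$-a.s.\ on $\{\tau_n<\infty\}$, even though a priori only $X_{\tau_n}\in\overline{U}_n$. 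You instead exploit the explicit structure $X^{\beta}_t=Y^{\beta}(\Pi^{\beta}_t)$: since the paths are step functions with jump times $\sigma_k\to\infty$, the debut of $U_n$ is attained, $\tau_n=\sigma_K$ and $X_{\tau_n}=Y^{\beta}(K)\in U_n$ pointwise on $\{\tau_n<\infty\}$ (up to a null set); this also makes the asserted right-continuity of $e^{-2t}\widehat{e}_n(X_t)$ immediate, where the paper appeals to IV.~Proposition 5.14 of Ref.~\cite{mr} and Ref.~\cite{get}. Both repairs are valid. The paper's argument is structure-independent — it works for any Hunt process and any $2$-excessive function, which is why it is the natural way to correct Lemma 3.7 of Ref.~\cite{mrz} and Lemma 3.2 of Ref.~\cite{mrs} as indicated in Remark \ref{mis} — while yours is more elementary and even shows that the original claim $X_{\tau_n}\in U_n$ of Ref.~\cite{mrz} is literally true for these piecewise-constant approximating processes (Remark \ref{mis} only objects that right-continuity alone yields merely $X_{\tau_n}\in\overline{U}_n$). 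Your stopping at $\tau_n\wedge T$ followed by $T\to\infty$, versus the paper's direct optional sampling at $\tau_n$ for the positive supermartingale, is an inessential difference.
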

\begin{proof}
Since by (\ref{iv}) $S_{\Delta}$ is invariant set of $\mathbb{M}^{\beta}$, the restriction $\mathbb{M}^{\beta}_{S_{\Delta}}$ of $\mathbb{M}^{\beta}$ 
to $S_{\Delta}$ is still a Hunt process. 
Since $\widehat{e}_n$ is a $(P^{\beta}_t)$-2-excessive function on $S_{\Delta}$ we have that $(e^{-2t}\widehat{e}_n(X_t))_{t\geq 0}$ 
is a positive right-continuous $(P_x^{\beta}, ({\cal{F}}_t^{\beta})_{t\ge 0})$-supermartingale for all $ x\in S_{\Delta}$ 
(use IV. Proposition 5.14 claim 1 of Ref. \cite{mr} and monotone classes as indicated in the proof of 5.8(iii) in Ref. \cite{get}). 
By the optional sampling theorem and normality we have
$$
E^{\beta}_x[e^{-2\tau_n}\widehat{e}_n(X_{\tau_n})]\leq \widehat{e}_n(x),\ x\in S_{\Delta}.
$$
By Lemma \ref{ss}(ii) we have that $\widehat{e}_n(x)\geq 1$ for all $x\in U_n$. 
Hence, by right-continuity for all $x\in S_{\Delta}$ we have $\widehat{e}_n(X_{\tau_n})=\lim_{\mathbb{Q}^*_+\ni t_n\downarrow \tau_n}\widehat{e}_n(X_{t_n})\geq 1\  P_x^{\beta}$-a.s. on $\{\tau_n < \infty\}$. (As usual we let $X_{\infty}:=\Delta$, and $f(\Delta):=0$ for any function $f$). It follows that for all $x\in S_{\Delta}$ 
$$
E_x^{\beta}[e^{-2\tau_n}]\leq E^{\beta}_x[e^{-2\tau_n}\widehat{e}_n(X_{\tau_n})] \leq \widehat{e}_n(x).
$$
\end{proof}\\
In view of Lemma \ref{liq2}, Lemma \ref{ss}, the proof of the following \lq\lq key theorem\rq\rq is the same as in Theorem 3.3 of Ref. \cite{mrz}.
\begin{theo}\label{key}
There exists a Borel subset $Z\subset Y$ and a Borel subset $\Omega\subset \Omega_{\overline{E}}$ with the following properties:
\begin{enumerate}
\item[(i)] $E\setminus Z$ is strictly ${\cal{E}}$-exceptional.
\item[(ii)] $R_{\alpha}(x, \overline{E}\setminus Z_{\Delta})=0\ \ \forall x\in Z_{\Delta},\ \alpha\in\mathbb{Q}_+^*.$
\item[(iii)] If $\omega\in\Omega$, then $\omega_t,\ \omega_{t-}\in Z_{\Delta}$ for all $t\geq 0$. Moreover, each $\omega\in\Omega$ is c\`adl\`ag in the 
original topology of $Y_{\Delta}$ and $\omega_{t-}^0=\omega_{t-}$ for all $t>0$, where $\omega_{t-}^0$ denotes the left limit in the original topology.
\item[(iv)] If $x\in Z_{\Delta}$ and $P_x$ is a weak limit of some sequence $(P^{\beta_j}_x)_{j\in\mathbb{N}}$ with $\beta_j \in\mathbb{Q}_+^*,\ \beta_j\uparrow\infty$, then $P_x[\Omega]=1$. 
\end{enumerate}
\end{theo}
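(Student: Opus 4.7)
The plan is to follow the blueprint of Theorem 3.3 in Ref.~\cite{mrz}, with $\widehat{e}_n$ playing the role that $e_n$ played there. First I would fix a strict $\mathcal{E}$-nest of compact metrizable sets $(F_k)_{k\ge 1}$ provided by strict quasi-regularity, put $U_n := E\setminus F_n$, and apply Lemmas \ref{gf} and \ref{ss} to these $U_n$ to produce the functions $\widehat{e}_n$ together with the Borel set $S\subset Y$ of Lemma \ref{ss}. I would then define $Z$ to be $S$ intersected with the countably many refinements needed to enforce $R_\alpha(x,\overline{E}\setminus Z_\Delta)=0$ for all $\alpha\in\mathbb{Q}_+^*$; since each refinement only removes a strictly $\mathcal{E}$-exceptional set (by Lemma \ref{ss}(i),(ii) and Lemma \ref{J0}(ii)), properties (i) and (ii) hold by construction.

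Next I would build $\Omega$. Writing $\tau_n := \inf\{t\ge 0 : X_t\in U_n\}$, Lemma \ref{liq2} together with Markov's inequality yields $P_x^\beta[\tau_n\le T]\le e^{2T}\widehat{e}_n(x)$ uniformly in $\beta\in\mathbb{Q}_+^*\cap[2,\infty)$, and Lemma \ref{ss}(iv) ensures this bound tends to $0$ for every $x\in Z_\Delta$. I would set
$$
\Omega := \bigcap_{T\in\mathbb{Q}_+^*}\liminf_{n\to\infty}\bigl\{\omega\in\Omega_{\overline{E}} : \omega_t,\omega_{t-}\in F_n\cup\{\Delta\}\ \forall\, t\in[0,T]\bigr\}\cap \Omega_0,
$$
where $\Omega_0$ is a countable Borel intersection encoding the kernel avoidance of the strictly $\mathcal{E}$-exceptional fibers entering the construction of $Z$. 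Property (iii) then reduces to the observation that on each compact metrizable $F_n$ the $\rho$-metric and the original topology of $Y_\Delta$ generate the same relative topology (the $g_n$ are continuous separating functions on $F_n$ for both), so any $\omega\in\Omega$ is automatically c\`adl\`ag in the original topology with $\omega_{t-}^0=\omega_{t-}$.

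The technical core is (iv). Since $F_n$ is compact, hence $\rho$-closed in $\overline{E}$, the set $U_n$ is open in $\overline{E}$ and therefore $\{\tau_n\le T\}$ is open in $\Omega_{\overline{E}}$; the Portmanteau theorem then propagates the estimate $P_x^{\beta_j}[\tau_n\le T]\le e^{2T}\widehat{e}_n(x)$ to any weak limit $P_x$, which gives $P_x[\Omega]=1$ after a countable intersection over $T$ and $n$. The main obstacle I anticipate is not the tightness bound itself but the treatment of left limits in the limiting measure: to guarantee $\omega_{t-}\in Z_\Delta$ for $P_x$-a.e.\ $\omega$ one must exclude that the weak limit charges the strictly $\mathcal{E}$-exceptional fibers of $Z$, which I would handle by combining Lemma \ref{l1}(ii) with the kernel property (ii) along the approximating Markov chains, using that these fibers are $R_\alpha$-polar uniformly in $\alpha\in\mathbb{Q}_+^*$.
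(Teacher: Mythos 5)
Your proposal takes essentially the same route as the paper: the paper's proof of Theorem \ref{key} consists precisely of invoking Theorem 3.3 of Ref. \cite{mrz} with the functions $\widehat{e}_n$ of Lemmas \ref{gf} and \ref{ss} and the hitting-time estimate of Lemma \ref{liq2}, which is exactly the blueprint you describe (strict nest of compact metrizable sets, iterative removal of strictly ${\cal{E}}$-exceptional sets for (i)--(ii), Skorokhod tightness plus Portmanteau for (iii)--(iv)). The only repairs needed are cosmetic and already implicit in that blueprint: in the Portmanteau step work with the $\overline{E}$-open set $\overline{E}\setminus(F_n\cup\{\Delta\})$ rather than with $U_n=E\setminus F_n$, which is not open in $\overline{E}$ (under $P^{\beta}_x$, $x\in S_{\Delta}$, the two hitting times agree a.s.\ by (\ref{iv})), and use the open event $\{\tau_n<T\}$ instead of $\{\tau_n\le T\}$, which is not open in the Skorokhod topology.
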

For $\alpha, \beta\ \in\mathbb{Q}_+^*$ let 
$R_{\alpha}^{\beta}f(x):=E_x^{\beta}\left[\int_0^{\infty}e^{-\alpha t}f(X_t)\; dt\right], f\in{\cal{B}}_b(\overline{E}),\ x\in\overline{E}$.
Since the identities (\ref{ts}) and (\ref{g}) carry over to $B_b(\overline{E})$ it is straight forward to check that the resolvent of the Yosida approximation has the following form
\begin{equation*}
R_{\alpha}^{\beta}f=\left(\frac{\beta}{\alpha +\beta}\right)^2 R_{\frac{\alpha \beta}{\alpha +\beta}}f+\frac{1}{\alpha +\beta}f.
\end{equation*}
For the explicit calculations we refer to Lemma 4.1 in Ref. \cite{mrz}. It is equally straight forward to check (see Lemma 4.2 in Ref. \cite{mrz}) that if $P_x$, $x\in\overline{E}$, is 
a weak limit of a subsequence $(P_x^{\beta_j})_{j\geq 1}$ with $\beta_j\uparrow\infty,\ \beta_j\in\mathbb{Q}_+^*$, then the kernel $P_t f(x):=E_x[f(X_t)]:=\int_{\Omega} f(X_t(\omega))P_x(d\omega), f\in{\cal{B}}_b(\overline{E})$,
satisfies  
\begin{equation}\label{atd}
\int_0^{\infty}e^{-\alpha t}P_t f(x)\; dt= R_{\alpha}f(x),\quad \forall f\in{\cal{B}}_b(\overline{E}),\ \alpha\in\mathbb{Q}_+^*.
\end{equation}
In particular, the kernels $P_t,\ t\geq 0$, are independent of the subsequence $(P_x^{\beta_j})_{j\geq 1}$. 
Then for every $x\in Z_{\Delta}$ ($Z$ as in a Theorem \ref{key}) the relatively compact set $\{P_x^{\beta}\mid \beta\in\mathbb{Q}_+^*\}$ 
has a unique limit $P_x$ for $\beta\uparrow\infty$, and the process $(\Omega_{\overline{E}},(X_t)_{t\geq 0}, (P_x)_{x\in Z_{\Delta}})$ is a Markov process with the transition semigroup 
$(P_t)_{t\geq 0}$ determined by (\ref{atd}). Moreover, 
\begin{equation}\label{tmv}
P_x[X_t \in Z_{\Delta},X_{t-}\in Z_{\Delta}\ for\ all\ t\geq 0]=1 
\end{equation}
for all $x\in Z_{\Delta}$. The proof of this is again the same as in Theorem 4.3 of Ref. \cite{mrz}.
\centerline{}
Up to this end let $(P_x)_{x\in Z_{\Delta}}$ be as in (\ref{tmv}), and $\Omega$ and $Z_{\Delta}$ be specified by Theorem \ref{key}. Since $P_x[\Omega]=1$ for all 
$x\in Z_{\Delta}$, we may restrict $P_x$ and the coordinate process $(X_t)_{t\geq 0}$ to $\Omega$. Let $({\cal{F}}_t)_{t\geq 0}$ be the natural filtration of $(X_t)_{t\geq 0}$.
Then exactly as in Theorem 4.4. of Ref. \cite{mrz} one shows that
$\mathbb{M}^{Z}:=(\Omega, (X_t)_{t\geq 0}, ({\cal{F}}_t)_{t\geq 0},(P_x)_{x\in Z_{\Delta}})$ is a Hunt process with respect to both the $\rho$-topology and the original topology.
\begin{rem}\label{fin}
Let $\mathbb{M}$ be the trivial extension  of $\mathbb{M}^Z$ to $E_{\Delta}$  (see IV. (3.48) in Ref. \cite{mr}, or IV. (2.18) in Ref. \cite{St1}). Then 
$\mathbb{M}$ is again a Hunt process and  strictly 
properly associated in the resolvent sense with $({\cal{E}},{\cal{F}})$ by (R2) and (\ref{atd}). The Hunt process $\mathbb{M}$ is unique up to the equivalence 
described in IV.6.3 of Ref. \cite{mr}. In this sense $\mathbb{M}$ is the same process as the one constructed in Theorem 3 of Ref. \cite{Tr5} under the condition {\bf SD3}.
\end{rem}
\begin{theo}\label{hunt2}
Let ${\cal{E}}$ be a strictly quasi-regular generalized Dirichlet form satisfying {\bf D3}. Then there exists a 
strictly $m$-tight Hunt process which is strictly properly associated in the resolvent sense with ${\cal{E}}$. 
\end{theo}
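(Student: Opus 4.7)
The proof plan is essentially to assemble the pieces built up in Sections 2--4 and verify that the resulting process meets every clause of the conclusion. By the discussion preceding Theorem \ref{key}, strict quasi-regularity together with {\bf D3} gives an $m$-tight $m$-special standard process (via IV.Theorem 2.2 in Ref.~\cite{St1}), a strict $\mathcal{E}$-nest of compact metrizable sets $(E_k)_{k\ge 1}$, the Lusin subset $Y_1$, the kernels $\widetilde R_\alpha$ satisfying (R1)--(R2), the reference function $\varphi$ from Proposition \ref{sp}(ii), and the family $J_0$ together with the refined set $Y$ from Lemma \ref{J0}. On this foundation, Section \ref{3} constructs the multivariate Poisson approximations $\mathbb{M}^\beta$, and Theorem \ref{tt} yields relative compactness of $\{P_x^\beta\mid \beta\in\mathbb{Q}_+^*\}$ on $\Omega_{\overline E}$ for every $x\in\overline{E}$.

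First, I would fix $x\in Z_\Delta$, with $Z$ as in Theorem \ref{key}. By relative compactness, every sequence $\beta_j\uparrow\infty$ in $\mathbb{Q}_+^*$ admits a subsequential weak limit $P_x$. The formula (\ref{atd}), together with the resolvent identity for the Yosida approximation, shows that any such limit has the same Laplace transform of its transition kernel $P_t$, hence by uniqueness of Laplace transforms the kernels $P_t$, and therefore $P_x$ itself, are independent of the chosen subsequence. Thus $\beta\uparrow\infty$ in $\mathbb{Q}_+^*$ yields a unique $P_x$. Theorem \ref{key}(iv) gives $P_x[\Omega]=1$, so by (\ref{tmv}) the coordinate process under $(P_x)_{x\in Z_\Delta}$ stays in $Z_\Delta$ and is càdlàg in the original topology of $Y_\Delta$ (part (iii) of Theorem \ref{key}). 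Restricting to $\Omega$, the tuple $\mathbb{M}^Z=(\Omega,(X_t)_{t\ge 0},({\cal F}_t)_{t\ge 0},(P_x)_{x\in Z_\Delta})$ is a Markov process; the Hunt property in both topologies follows exactly as in Theorem 4.4 of Ref.~\cite{mrz}, using the right-continuity and quasi-left-continuity arguments that apply since each $P_x^\beta$ is Hunt (Proposition \ref{ah}) and strong Markov, and the strict-exceptionality of $E\setminus Z$.

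Next, I would define $\mathbb{M}$ to be the trivial extension of $\mathbb{M}^Z$ to $E_\Delta$ as in Remark \ref{fin} (making points of $E\setminus Z$ traps). Since $E\setminus Z$ is strictly $\mathcal{E}$-exceptional by Theorem \ref{key}(i), the extension remains a Hunt process; its resolvent coincides on $Z$ with the Laplace transform in (\ref{atd}), and by (R2) this is a strictly $\mathcal{E}$-quasi-continuous $m$-version of $G_\alpha f$ for every measurable $f\in\mathcal{H}$. This is exactly the condition for $\mathbb{M}$ to be strictly properly associated with $(\mathcal{E},\mathcal{F})$ in the resolvent sense. Strict $m$-tightness is immediate from the strict $\mathcal{E}$-nest $(E_k)_{k\ge 1}$ of compact metrizable subsets of $E$ produced at the beginning of Section \ref{2.3}, since the sample paths of $\mathbb{M}$ hit $\bigcup_k E_k=Y_1\supset Z$ with full probability starting from $Z_\Delta$, and the remaining starting points form a strictly exceptional set.

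The genuinely difficult step has already been absorbed into the preparatory results: the substitution of the modified excessive functions $\widehat e_n$ (Lemma \ref{gf}) for the $e_n$ of Ref.~\cite{mrz}, which is what makes the tightness estimate Lemma \ref{liq2} survive in the non-sectorial setting where (\ref{enh}) is not available a priori. Once Lemma \ref{liq2} and Theorem \ref{key} are in hand, the present theorem is a synthesis: uniqueness of the limit via (\ref{atd}), path regularity and invariance of $Z_\Delta$ via Theorem \ref{key}(iii)--(iv), Hunt property via the standard argument of Ref.~\cite{get} adapted as in IV.3.21 of Ref.~\cite{mr}, and strict proper association via (R2) combined with (\ref{atd}).
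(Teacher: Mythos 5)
Your construction of the process and the verification of strict proper association follow exactly the paper's route: the paper's proof of Theorem \ref{hunt2} is literally a pointer to Remark \ref{fin} (trivial extension of $\mathbb{M}^Z$ to $E_\Delta$, association via (R2) and (\ref{atd})), and everything you write up to that point is a faithful re-assembly of Sections \ref{3} and \ref{4}, so that part is fine.

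The gap is in your treatment of strict $m$-tightness. You claim it is ``immediate \dots since the sample paths of $\mathbb{M}$ hit $\bigcup_k E_k=Y_1\supset Z$ with full probability starting from $Z_\Delta$.'' That statement is trivially true (the process lives in $Z_\Delta$ by (\ref{tmv})) and is not what tightness asserts: strict $m$-tightness requires control of the \emph{exit behaviour}, namely that the hitting times $\sigma_{E\setminus E_k}$ of the complements of the compact nest sets increase to (at least) the lifetime, $P_x$-a.s.\ for strictly quasi-every $x$, so that the process spends its life inside the compacts and does not escape $\bigcup_k E_k$ before $\zeta$. Merely being started in, or touching, $Y_1$ gives nothing in this direction. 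The paper closes precisely this step by combining the strict ${\cal E}$-nest of compact sets from Definition 2(i) of Ref.\ \cite{Tr5} with the probabilistic representation of the strict capacity in Lemma 1(i) of Ref.\ \cite{Tr5}: since $\mbox{Cap}_{1,\widehat G_1\varphi}(E\setminus E_k)\to 0$ and the capacity of an open set is expressed through (a reduced function of, equivalently an expectation involving) the hitting time of that set under the associated process, one concludes that $e^{-\sigma_{E\setminus E_k}}\to 0$ a.s.\ in the appropriate sense, which is the tightness statement. Your proposal never invokes this capacity representation (or any substitute for it), so as written the tightness clause of the theorem is unproved; you should replace your sentence by the argument via Lemma 1(i) of Ref.\ \cite{Tr5} applied to the complements $E\setminus E_k$ of the strict nest of compact metrizable sets.
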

\begin{proof}
For the existence of the Hunt process which is strictly  properly associated in the resolvent sense with ${\cal{E}}$ see Remark \ref{fin}. 
The $m$-tightness is a direct consequence of the existence of a strict ${\cal{E}}$-nest like in Definition 2(i) of Ref. \cite{Tr5} and the representation of the capacity 
from Lemma 1(i) in Ref. \cite{Tr5}. 
\end{proof}

\end{document}